\newtheorem{thm}{Theorem}[section]
\newtheorem{cor}[thm]{Corollary}
\newtheorem{lem}[thm]{Lemma}
\theoremstyle{definition}
\theoremstyle{remark}
\newtheorem{rem}[thm]{Remark}
\numberwithin{equation}{section}
\begin{document}
	\title[]
	{pseudolocality theorems of Ricci flows on incomplete manifolds
	}
	
	\author{Liang Cheng}

	
	\subjclass[2020]{Primary 53E20; Secondary 		53C20 .}

	\keywords{Ricci flows; Pseudolocality theorems; Incomplete manifolds, Existence; Rigidity}
	
	\thanks{Liang Cheng's  Research partially supported by
		Natural Science Foundation of China 12171180
	}
	
	\address{School of Mathematics and Statistics $\&$ Hubei Key Laboratory of Mathematical Sciences, Central  China Normal University, Wuhan, 430079, P.R.China}
	
	\email{chengliang@ccnu.edu.cn }

	\maketitle

	\begin{abstract}
		In this paper we study the pseudolocality theorems of Ricci flows on incomplete manifolds. We prove that if a relatively compact ball in an incomplete manifold has the small scalar curvature lower bound and almost Euclidean isoperimetric constant, or almost Euclidean local $\boldsymbol{\nu}$ constant, then we can construct a solution of Ricci flow in a smaller ball for which the pseudolocality theorems hold on a uniform time interval. We also give two applications. First, we prove the short-time existence of Ricci flows on complete manifolds
		with scalar curvature bounded below uniformly and almost Euclidean isoperimetric
		inequality holds locally. Second, we obtain a rigidity theorem that any complete manifold
		with nonnegative scalar curvature and Euclidean isoperimetric
		inequality must be isometric to the Euclidean space.
	\end{abstract}

	\section{Introduction}
	
	The Ricci flow is a geometric evolution equation introduced by Hamilton \cite{H1}, which deforms a Riemannian manifold by the Ricci curvature
	$$\frac{\partial}{\partial t}g(t)=-2Rc(g(t)).$$
	In [23], Perelman proved an interior curvature
	estimate for Ricci flows known as the pseudolocality theorem,
	which becomes an important tool in the study the Ricci flows and even many problems in Riemannian geometry.
	The celebrated Perelman's pseudolocality states that
	\begin{thm}[Perelman's pseudolocality theorem \cite{P1}]\label{perelman_pseudo_locality}
		For every $\alpha>0$ and $n \geq 2$ there exist $\delta>0$ and $\epsilon_0>0$ depending only on $\alpha$ and $n$ with the following property. Let $\left(M, g(t)\right), t \in\left[0,\left(\epsilon r_0\right)^2\right]$, where $\epsilon \in\left(0, \epsilon_0\right]$ and $r_0 \in(0, \infty)$, be a complete solution of the Ricci flow with bounded curvature   and let $x_0 \in M$ be a point such that
		\begin{equation}\label{small_scalar_cur}
			R(x, 0) \geq-r_0^{-2}
		\end{equation}
		for $x \in B_{g_0}\left(x_0, r_0\right)$
		and
		\begin{equation}\label{almost_euclidean_iso}
			\left(\text { Area }_{g_0}(\partial \Omega)\right)^n \geq(1-\delta) c_n\left(\operatorname{Vol}_{g_0}(\Omega)\right)^{n-1}
		\end{equation}
		for any regular domain $\Omega \subset B_{g_0}\left(x_0, r_0\right)$, where $c_n \doteqdot n^n \omega_n$ is the Euclidean isoperimetric constant. Then we have the interior curvature estimate
		$$
		|\operatorname{Rm}|(x, t) \leq \frac{\alpha}{t}+\frac{1}{\left(\epsilon r_0\right)^2}
		$$
		for $x \in \mathcal{M}$ such that $d_{g(t)}\left(x, x_0\right)<\epsilon r_0$ and $t \in\left(0,\left(\epsilon r_0\right)^2\right] $.
	\end{thm}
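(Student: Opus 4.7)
The plan is to argue by contradiction using the monotonicity of Perelman's $\mathcal{W}$-functional along the conjugate heat equation coupled to the Ricci flow. After scaling assume $r_0 = 1$. If the conclusion fails for some fixed $\alpha > 0$, one obtains sequences $\delta_i \downarrow 0$, $\epsilon_i \downarrow 0$, Ricci flows $(M_i^n, g_i(t))$ on $[0, \epsilon_i^2]$ satisfying \eqref{small_scalar_cur}--\eqref{almost_euclidean_iso} with $\delta = \delta_i$, and points $(x_i, t_i)$ at which the curvature estimate fails. First I would carry out Perelman's iterated point-selection (a localized parabolic maximum argument) to replace $(x_i, t_i)$ by a bad point $(\bar x_i, \bar t_i)$ with $d_{g_i(\bar t_i)}(\bar x_i, x_{0,i}) = o(1)$, $Q_i := |\operatorname{Rm}|(\bar x_i, \bar t_i) \ge \alpha/\bar t_i$, and $|\operatorname{Rm}| \le 4 Q_i$ on a backward parabolic neighborhood of size $\asymp Q_i^{-1/2}$ in space and $\asymp Q_i^{-1}$ in time.

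Next, let $u_i = (4\pi(\bar t_i - t))^{-n/2} e^{-f_i}$ be the fundamental solution of the conjugate heat equation on $[0, \bar t_i]$ with $u_i(\cdot, \bar t_i) = \delta_{\bar x_i}$, and fix $\tau_i = c Q_i^{-1}$. Rescaling $g_i$ by $Q_i$ around $(\bar x_i, \bar t_i)$ yields pointed solutions with uniformly bounded curvature on a fixed parabolic region and $|\operatorname{Rm}| = 1$ at the base. Passing to a subsequential Cheeger--Hamilton limit, the rescaled conjugate heat kernels converge smoothly to the fundamental solution of a non-flat limit flow. Since $\mathcal{W}(g_\infty, f, \tau) = 0$ is rigid---it forces a gradient shrinking Gaussian soliton on $\mathbb{R}^n$ and hence flatness---one obtains a strict inequality
$$
\mathcal{W}\bigl(g_i(\bar t_i - \tau_i),\, f_i(\cdot, \bar t_i - \tau_i),\, \tau_i\bigr) \le -c(n,\alpha) < 0
$$
for $i$ large, and by monotonicity of $\mathcal{W}$ under the conjugate heat flow this inequality propagates back to $t = 0$.

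On the other hand, \eqref{almost_euclidean_iso} on $B_{g_0}(x_{0,i}, 1)$ yields, via the Bobkov--Ledoux form of the sharp logarithmic Sobolev inequality together with \eqref{small_scalar_cur} to absorb the scalar-curvature term, a lower bound
$$
\mathcal{W}(g_i(0), f, \tau) \ge -\eta(\delta_i), \qquad \eta(\delta_i) \downarrow 0,
$$
for every smooth $f$ supported in $B_{g_0}(x_{0,i}, 1)$ with $(4\pi\tau)^{-n/2} e^{-f}$ a probability density and $\tau$ not too large. Gaussian upper bounds on $u_i(\cdot, 0)$ (using the parabolic curvature control and $d_{g_0}(\bar x_i, x_{0,i}) = o(1)$) show that the mass of $u_i(\cdot, 0)$ outside $B_{g_0}(x_{0,i}, 1)$ is $o(1)$, so a cutoff of $f_i(\cdot, 0)$ is an admissible competitor and changes $\mathcal{W}$ only by $o(1)$. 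Combining with the previous step yields $-\eta(\delta_i) - o(1) \le -c(n,\alpha)$, a contradiction for large $i$.

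The hardest step is the compactness/rigidity argument producing the strict upper bound on $\mathcal{W}$ at the intermediate time. One must (i) extract a genuine smooth limit Ricci flow on a definite parabolic neighborhood, which requires volume non-collapsing---traditionally derived from Perelman's reduced-volume monotonicity rather than from $\mathcal{W}$-monotonicity alone; (ii) guarantee that the rescaled conjugate heat kernels concentrate near the base point rather than diffusing away in the limit; and (iii) rule out the degenerate flat/Gaussian case on the limit via the rigidity of the sharp log-Sobolev inequality. Perelman's original treatment interlaces the $\mathcal{W}$ argument with $\mathcal{L}$-length and reduced-volume estimates precisely to supply these controls.
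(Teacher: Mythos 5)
The paper does not prove Theorem \ref{perelman_pseudo_locality}: it is recalled as a known result, cited to Perelman \cite{P1}, with the complete noncompact case attributed to Chau--Tam--Yu \cite{CTY}, so there is no internal proof against which to compare your sketch. As a summary of the standard argument encoded in those references, your outline is essentially sound: contradiction setup, Perelman's point-picking to normalize a bad point, monotonicity of $\mathcal{W}$ along the conjugate heat flow applied to the conjugate heat kernel based at the bad point, a lower bound on $\mathcal{W}$ at $t=0$ coming from the almost-Euclidean isoperimetric inequality (via log-Sobolev) plus the scalar curvature lower bound, and Cheeger--Hamilton compactness with Gaussian-soliton rigidity to force a strictly negative intermediate $\mathcal{W}$-value. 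The three difficulties you flag---non-collapsing for compactness, concentration of the rescaled conjugate heat kernels near the base, and controlling the cutoff error when truncating $f_i(\cdot,0)$ to make it an admissible competitor---are exactly the points where Perelman and Chau--Tam--Yu do the hard technical work (the Gaussian upper bounds you invoke at $t=0$ are in fact not freely available, since curvature control only holds in a backward parabolic neighborhood of scale $Q_i^{-1}$; Perelman replaces this with a more delicate cutoff-and-distance-distortion argument), but you correctly acknowledge that these require the interlaced $\mathcal L$-length and reduced-volume machinery.

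It is worth emphasizing that the proofs this paper actually supplies concern the \emph{incomplete} variants, Theorems \ref{main_result} and \ref{main_result2}, and those proceed by a genuinely different route: an inductive construction in the style of Hochard and Simon--Topping that repeatedly conformally completes the metric outside a shrinking ball (Theorem \ref{Ho}), runs Shi's complete bounded-curvature flow for a short time, and controls the local $\boldsymbol{\nu}$-entropy across each step using Wang's almost-monotonicity and Harnack inequality (Lemma \ref{key_lemma}, Theorem \ref{Harnack}, Theorem \ref{isoperimetric}), closing the loop with a compactness-and-rigidity argument at the level of $\boldsymbol{\nu}$. No point-picking and no conjugate-heat-kernel-at-a-bad-point argument appear. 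So your proposal is a reasonable reconstruction of the cited background theorem, but it is orthogonal to the method this paper develops and would not, by itself, yield the incomplete versions that are the paper's actual contribution.
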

The original version of Perelman's pseudolocality theorem \cite{P1} was proved under the
assumption of the manifold being closed. In the complete and noncompact case, this result
was verified by Chau, Tam and Yu\cite{CTY}. Tian and Wang \cite{TW} proved another version of
	pseudolocality theorem in which they showed that the conditions (\ref{small_scalar_cur}) and (\ref{almost_euclidean_iso}) in Theorem \ref{perelman_pseudo_locality} can be replaced
	by small Ricci curvature and almost Euclidean volume ratio.
	 Subsequently, Wang \cite{w2} improved both Perelman and Tian-Wang's pseudolocality theorems and proved that if for each $\alpha>0$, there exists $\delta=\delta
	(\alpha,n)$ such that
	\begin{equation}\label{wang}
	\boldsymbol{\nu}\left(B_{g_0}\left(x_0, \delta^{-1} \sqrt{T}\right), g_0, T\right) \geq-\delta^2
\end{equation}
	for the complete  Ricci flow $\left(M, g(t)\right)|_{ 0 \leq t \leq T}$ with bounded curvature  , then
	$$|R m|(x, t) \leq \frac{\alpha}{t}$$
	for  $(x,t) \in B_{g(t)}\left(x_0, \alpha^{-1} \sqrt{t}\right)\times (0, T]$,
	where $\boldsymbol{\nu}$  is the localized Perelman's entropy  which is defined as
	\begin{equation}\label{local_mu}
		\begin{aligned}
			\boldsymbol{\mu}(\Omega, g, \tau):&=\inf\limits_{\varphi \in \mathcal{S}(\Omega)} \mathcal{W}(\Omega, g, \varphi, \tau)\\
			&=\inf\limits_{\varphi \in \mathcal{S}(\Omega)} \left\{-n-\frac{n}{2} \log (4 \pi \tau)+\int_{\Omega}\left\{\tau\left(R \varphi^2+4|\nabla \varphi|^2\right)-2 \varphi^2 \log \varphi \right\} d vol_{g}\right\},\\
			\boldsymbol{\nu}(\Omega, g, \tau):&=\inf\limits_{s\in(0,\tau]}\boldsymbol{\mu}(\Omega, g, s),
		\end{aligned}
	\end{equation}
	and
	$\mathcal{S}(\Omega):=\left\{\varphi \mid \varphi \in W_0^{1,2}(\Omega),  \varphi \geq 0,  \int_{\Omega} \varphi^2 d v=1\right\}$.
	One may also see \cite{CMZ} for another proof of above pseudolocality theorems based on Bamler's $\epsilon$-regularity theorem \cite{RB1}. In \cite{RB3}  Bamler  also obtained a backward version pseudolocality theorem.
	
	Note that the above pseudolocality theorems are not really local results since they all
	require completeness and bounded curvature of the Ricci flows. So a natural question is that whether the pseudolocality theorems still hold without the completeness assumption?
	However, the following example due to Peter Topping indicates  that not
	all solutions of Ricci flow starting from an incomplete metric have the
	pseudolocality theorem; see Example $21.5$ and Theorem $21.6$ in \cite{RFV3} :
	Consider a cylinder
	$$
	\mathcal{S}^1(r) \times[-1,1]
	$$
	with the flat product metric, where $\mathcal{S}^1(r)$ denotes the circle of radius $r$. We cap each of the two ends of the cylinder with a disc $D^2$ and use a cutoff function to smoothly blend the cylinder metric with the round hemisphere $\mathcal{S}_{+}^2(r)$ in thin collars about their boundaries to construct a rotationally symmetric surface $(\Sigma^2,g_0^r)$ with nonnegative curvature. Let $g^r(t)$ be the solution of Ricci flow on $\Sigma$ with initial data $g_0^r$.
	Now define incomplete solution of the Ricci flow as follows:
	Let
	$$
	\mathcal{M}^2 \doteqdot\left(-\frac{3}{5}, \frac{3}{5}\right) \times\left(-\frac{3}{5}, \frac{3}{5}\right)
	$$
	and define the (into) local covering map
	$$
	\phi: \mathcal{M}^2 \rightarrow \mathcal{S}^1(r) \times[-1,1] \subset \Sigma^2
	$$
	by
	$$
	\phi(x, y)=(\pi(x), y)
	$$
	where  $\pi: \mathbb{R} \rightarrow \mathcal{S}^1(r)$ denote the standard covering map given by $\pi(x)=[x]$ is the equivalence class of $x \bmod 2 \pi r$. By the Gauss-Bonnet theorem we have
	$$
	\frac{d}{d t} \operatorname{Area}_{g^r(t)}(\Sigma)=-\int_{\Sigma} R_{g^r(t)} d \mu_{g^r(t)}=-8 \pi,
	$$
	so that
	$$
	\operatorname{Area}_{g^r(t)}(\Sigma)=\operatorname{Area}_{g_0^r}(\Sigma)-8 \pi t.
	$$
In fact, Hamilton \cite{H3}
proved that metrics on $\mathcal{S}^2$ with nonnegative curvature shrink to round points under the Ricci flow, we have  $\lim\limits _{t \to T^r}\left(\inf _{x \in \mathcal{M}} R_{g^r}(x, t)\right)=\infty$ where $T^r=\frac{1}{8 \pi} \operatorname{Vol}_{g_0^r}(\Sigma)$.	
	When $r$ is sufficient small,
	the pseudolocality theorems clearly do not hold for the incomplete Ricci flow $\left(\mathcal{M}^2, g_{\mathcal{M}}^r(t)\right)$ even its initial metric is flat, where $g_{\mathcal{M}}^r(t) \doteqdot \phi^* g^r(t)$.
	However,
	the Ricci flow starting from an incomplete initial metric always may not just have one solution. So Topping's example does not imply we could not
	always find just one solution of Ricci flow starting from an incomplete
	metric for which the pseudolocality holds.
	Actually, for Topping's example, pseudolocality theorems obviously hold for the flat solution on $\mathcal{M}^2$ with inital metric $g_{\mathcal{M}}^r(0)$.
	
 At first sight, one can easily perform a conformal change to a relatively compact ball so that the resulting new metric is complete and has the bounded sectional curvature; see Theorem \ref{Ho}.  By using Shi's local existence theorem for the Ricci flow of noncompact manifold, we have a solution of Ricci flow which exists on time interval $[0,T]$ ; see \cite{Sh}.
  Then one can restrict the flow to a smaller ball unchanged to has a  Ricci flow and pseudolocality holds on interval $[0,T]$.    However, owing to Shi's local existence theorem,  $T$ is dependent on the bound of sectional curvature the for the local metric of initial time.  
  This version of pseudolocality is not our purpose since it is not sufficient to get a solution of Ricci flow for noncompact manifolds which may not have bounded curvature; see Theorem \ref{existence} and Theorem \ref{rigidity} below. 

		In this paper,  we use an inductive conformal changing method, which was introduced in \cite{T3},  to show that
 if a relatively compact ball
	contained in an incomplete manifold satisfying (\ref{wang}),  or (\ref{small_scalar_cur}) and (\ref{almost_euclidean_iso}), then we can construct a solution of Ricci flow in a smaller ball for which  the pseudolocality theorems hold on a uniform time interval $[0,T]$ with
	$T$  depending only on $\alpha$ and the dimension.
	
	\begin{thm}\label{main_result}
		For each $\alpha>0$ and $n\ge 2$, there exist $\delta=\delta
		(\alpha,n)$ and $\epsilon(\alpha,n)$ with the following properties.
		Suppose $\left(M, g_0\right)$ is a smooth $n$-dimensional Riemannian manifold (not necessarily complete) such that $B_{g_0}\left(x_0, \delta^{-1} \sqrt{T}\right)\Subset M$ and
		$$
		\boldsymbol{\nu}\left(B_{g_0}\left(x_0, \delta^{-1} \sqrt{T}\right), g_0, T\right) \geq-\delta^2
		$$
		for some $T>0$.
		Then for each $\eta \in(0, 1)$ there exists a smooth Ricci flow $g(t)$ on $B_{g_0}\left(x_0,  (1-\eta)\delta^{-1} \sqrt{T}\right)\times [0, (\epsilon\eta)^2T]$ with $g(0)=g_0$ satisfying
		\begin{equation}
			|R m|(x, t) \leq \frac{\alpha}{t}
		\end{equation}
		and
		\begin{equation}
			\inf _{\rho \in\left(0, \alpha^{-1} \sqrt{t}\right)} \frac{\operatorname{Vol}\left(B_{g(t)}(x, \rho)\right)}{\rho^{n}} \geq(1-\alpha) \omega_n
		\end{equation}\label{non_collapsing}
		for  $(x,t)\in B_{g_0}\left(x_0,  (1-\eta)\delta^{-1} \sqrt{T}\right)\times [0, (\epsilon\eta)^2T]$.
	\end{thm}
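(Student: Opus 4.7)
The plan is to approximate $(M, g_0)$ near $\overline{B_{g_0}(x_0, r_0)}$ (where $r_0 := \delta^{-1}\sqrt{T}$) by complete, bounded-curvature metrics $(N_k, g_{0,k})$ that agree with $g_0$ on a neighborhood of $\overline{B_{g_0}(x_0, r_0)}$, apply Wang's localized pseudolocality theorem to each approximating Ricci flow, and extract a limit. Since $\overline{B_{g_0}(x_0, r_0)}$ is compact in $M$, one can pick $r_0' > r_0$ with $\overline{B_{g_0}(x_0, r_0')} \Subset M$ and, using a cutoff in the annulus $B_{g_0}(x_0, r_0') \setminus \overline{B_{g_0}(x_0, r_0)}$ together with a long cylindrical collar attached to a Euclidean cap, produce $(N_k, g_{0,k})$ which is smooth, complete, has bounded sectional curvature, and is arranged so that $B_{g_0}(x_0, r_0)$ is $g_{0,k}$-convex; in particular $g_{0,k}$-distances between points of $B_{g_0}(x_0, r_0)$ coincide with $g_0$-distances.

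By Shi's existence theorem each $(N_k, g_{0,k})$ admits a complete bounded-curvature Ricci flow $g_k(t)$ on some interval $[0, T_k)$. Because $g_{0,k} = g_0$ on $B_{g_0}(x_0, r_0)$ and $\boldsymbol{\nu}$ depends only on the restriction of the metric to its domain, the hypothesis yields
\[
\boldsymbol{\nu}\bigl(B_{g_0}(x_0, r_0),\,g_{0,k},\,T\bigr) \;\geq\; -\delta^2,
\]
and for any $p \in B_{g_0}(x_0, (1-\eta) r_0)$, the $g_{0,k}$-convexity guarantees $B_{g_{0,k}}(p, \eta r_0) \subset B_{g_0}(x_0, r_0)$, so monotonicity of $\boldsymbol{\nu}$ in the domain transfers the lower bound to $B_{g_{0,k}}(p, \eta r_0)$. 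Setting $\delta := \eta\,\delta_W(\alpha, n)$, where $\delta_W$ is the constant from Wang's pseudolocality, we have $\eta r_0 = \delta_W^{-1}\sqrt{T}$ and $-\delta^2 \geq -\delta_W^2$, so Wang's theorem applied to the complete flow $g_k(t)$ centered at $p$ yields $|Rm|_{g_k(t)}(p, t) \leq \alpha/t$ on the existence interval.

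The bound $|Rm| \leq \alpha/t$ implies a distance distortion of order $\sqrt{\alpha t}$, integrable down to $t = 0$. For $\epsilon = \epsilon(\alpha, n)$ sufficiently small, points of $B_{g_0}(x_0, (1-\eta) r_0)$ remain in the controlled-geometry region throughout $[0, (\epsilon\eta)^2 T]$, which via Shi's continuation principle allows $g_k(t)$ to be extended past $(\epsilon\eta)^2 T$ on a neighborhood of $\overline{B_{g_0}(x_0, (1-\eta) r_0)}$. A local version of Hamilton's compactness theorem then produces a subsequential smooth limit $g(t)$ on $B_{g_0}(x_0, (1-\eta) r_0) \times (0, (\epsilon\eta)^2 T]$ with the claimed curvature bound, and the integrable distortion as $t \to 0^+$ identifies $g(0) = g_0$. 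The volume lower bound is Perelman's $\boldsymbol{\nu}$-noncollapsing applied to the limit, with the same localized $\boldsymbol{\nu}$-input.

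The main obstacle is the bootstrap needed to extend the flows $g_k(t)$ to time $(\epsilon\eta)^2 T$ uniformly in $k$: pseudolocality requires an already-existing flow in order to deliver the $\alpha/t$ estimate, while Shi's short-time theorem needs that estimate to continue the flow. The parameter $\eta$ provides a buffer in the annulus $B_{g_0}(x_0, r_0) \setminus B_{g_0}(x_0, (1-\eta) r_0)$ that absorbs the distance distortion during this iteration; arranging the extension so that this buffer is not spoiled by boundary effects, and verifying that the approximating manifolds retain a quantitative localized $\boldsymbol{\nu}$-lower bound uniformly in $k$, are the delicate technical points that must be handled with care.
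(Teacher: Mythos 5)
Your outline correctly identifies the need to make the incomplete metric complete while preserving it near the ball, and it correctly identifies (at the end) the central obstruction: pseudolocality needs a flow on $[0,T]$, while Shi only hands you a tiny initial time interval. Unfortunately, this obstruction is not a ``delicate technical point'' to be addressed later --- it is essentially the entire content of the theorem, and your proposal does not close it. Wang's pseudolocality (as stated) assumes a complete bounded-curvature flow already exists on $[0,T]$; your approximating flows $g_k(t)$ will typically survive only for a time comparable to the inverse of the curvature bound of $g_{0,k}$, which after the collar-and-cap modification is an uncontrolled constant. The interior estimate $|Rm|\le \alpha/t$ coming from pseudolocality is only \emph{local}; near the added cap the curvature may still blow up before $(\epsilon\eta)^2T$, so Shi's continuation criterion (which is a global bound) does not apply and the flow cannot simply be prolonged. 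An actual iteration is required: after each short time one must again truncate, apply a Hochard-type conformal extension (Theorem~\ref{Ho}) to restore completeness and bounded curvature, and restart Shi's flow. The missing ingredient that makes this iteration close is Lemma~\ref{key_lemma}: a quantitative monotonicity estimate showing that the local $\boldsymbol{\nu}$-functional over a shrinking family of balls decreases only by a controlled, summable amount across the successive restarts, so that the almost-Euclidean $\boldsymbol{\nu}$-lower bound (and hence the $\alpha/t$ interior curvature bound, via a contradiction/compactness argument against Euclidean space) persists uniformly. Your proposal contains no such almost-monotonicity mechanism, and without it the uniform-in-$k$ control you invoke for the Hamilton compactness step is unjustified.

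There is also a quantifier problem. By setting $\delta := \eta\,\delta_W(\alpha,n)$ you make $\delta$ depend on $\eta$, whereas the theorem asserts $\delta=\delta(\alpha,n)$ with $\eta$ free to be chosen afterwards. Your version would prove only a strictly weaker statement in which the required smallness of $\delta$ (hence the size of the ball on which the $\boldsymbol{\nu}$-hypothesis must hold) deteriorates as $\eta\to 0$. In the paper's proof the constant $\delta^{-1}=10A$ is fixed once $A\ge A_\alpha$, and the loss of radius to $(1-\eta)\delta^{-1}\sqrt T$ is absorbed by a geometric series of the radii $r_k$ and times $t_k$, with $\eta$ entering only through the stopping index $k$; this is how $\epsilon(\alpha,n)$, rather than $\delta$, carries the $\eta$-dependence of the existence time. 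Finally, the collar-and-cap extension you describe is a reasonable substitute for Hochard's conformal trick for a \emph{single} extension, but it gives no quantitative curvature control in terms of the curvature of $g_0$ on the ball, which is exactly what is needed to bound each restart time $t_{k+1}-t_k$ from below in the iteration; Theorem~\ref{Ho} supplies precisely this quantitative bound $|{\rm Rm}(h)|\le\sigma\rho^{-2}$.
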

	
	As a corollary to theorem \ref{main_result}, we have the following pseudolocality theorem related to Perelman's version.
	
	\begin{thm}\label{main_result2}
		For each $\alpha>0$ and $n\ge 2$,  there exist $\delta=\delta
		(\alpha,n)$ and $\epsilon(\alpha,n)$ with the following properties.
		Suppose $\left(M, g_0\right)$ is a smooth $n$-dimensional Riemannian manifold (not necessarily complete) such that $B_{g_0}\left(x_0, r_0^2\right)\Subset M$ $$R(x) \geq-r_0^{-2} $$ for $x \in B_{g_0}\left(x_0, r_0\right)$
		and
		$$
		\left(\operatorname { Area }_{g_0}(\partial \Omega)\right)^n \geq(1-\delta)  n^n \omega_n\left(\operatorname{Vol}_{g_0}(\Omega)\right)^{n-1}
		$$
		for any regular domain $\Omega \subset B_{g_0}\left(x_0, r_0\right)$.
		Then for each $\eta \in(0, 1)$ there exists a smooth Ricci flow $g(t)$ on $B_{g_0}\left(x_0,(1-\eta) r_0\right) \times\left[0,\left(\epsilon \eta r_0\right)^2\right]$ with $g(0)=g_0$ satisfying
		$$
		|\mathrm{Rm}|(x,t) \leq \frac{\alpha}{t}+\frac{1}{\left(\epsilon \eta r_0\right)^2},
		$$
		and
		$$\inf _{\rho \in\left(0, \alpha^{-1} \sqrt{t}\right)} \frac{\operatorname{Vol}\left(B_{g(t)}(x, \rho)\right)}{\rho^{n}} \geq(1-\alpha) \omega_n$$
		for  $(x,t) \in B_{g_0}\left(x_0,(1-\eta) r_0\right) \times\left(0,\left(\epsilon \eta r_0\right)^2\right]$.
	\end{thm}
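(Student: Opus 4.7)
The plan is to reduce Theorem \ref{main_result2} to Theorem \ref{main_result} by showing that the two hypotheses here—small scalar curvature lower bound and almost Euclidean isoperimetric inequality on $B_{g_0}(x_0, r_0)$—force the local $\boldsymbol{\nu}$-entropy lower bound required by Theorem \ref{main_result}. Let $\delta_1(\alpha,n)$ and $\epsilon_1(\alpha,n)$ be the constants produced by Theorem \ref{main_result}. I will set $T := (\delta_1 r_0)^2$ so that $\delta_1^{-1}\sqrt{T} = r_0$, and aim to prove $\boldsymbol{\nu}(B_{g_0}(x_0, r_0), g_0, T) \geq -\delta_1^2$ provided the isoperimetric defect $\delta$ is taken small enough in terms of $(\alpha,n)$. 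Once this is in hand, Theorem \ref{main_result} delivers a Ricci flow on $B_{g_0}(x_0, (1-\eta)r_0) \times [0, (\epsilon_1 \eta)^2 T] = B_{g_0}(x_0, (1-\eta)r_0) \times [0, (\epsilon_1 \delta_1 \eta r_0)^2]$; setting $\epsilon := \epsilon_1 \delta_1$ matches the time interval exactly. The curvature estimate $|\mathrm{Rm}| \leq \alpha/t$ furnished by Theorem \ref{main_result} is strictly stronger than the claimed $\alpha/t + 1/(\epsilon\eta r_0)^2$, and the non-collapsing bound transfers verbatim.

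The central analytic step is the passage from the isoperimetric hypothesis to a nearly Euclidean logarithmic Sobolev inequality on $B_{g_0}(x_0, r_0)$. The coarea formula applied to
\begin{equation*}
(\operatorname{Area}_{g_0}(\partial\Omega))^n \geq (1-\delta)\, n^n \omega_n (\operatorname{Vol}_{g_0}(\Omega))^{n-1}
\end{equation*}
for regular $\Omega \subset B_{g_0}(x_0, r_0)$ yields the $L^1$-Sobolev inequality $\|\varphi\|_{L^{n/(n-1)}} \leq (1-\delta)^{-1/n}(n\omega_n^{1/n})^{-1} \|\nabla \varphi\|_{L^1}$ for $\varphi \in C_c^\infty(B_{g_0}(x_0, r_0))$. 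Substituting appropriate powers and invoking H\"older upgrades this to an $L^2$-Sobolev inequality with constants within $O(\delta)$ of sharp Euclidean, and a Beckner/Gross-type optimization then produces
\begin{equation*}
\int \varphi^2 \log \varphi^2\, dv \leq 4\tau \int |\nabla\varphi|^2\, dv - \tfrac{n}{2}\log(4\pi\tau) - n + \psi_n(\delta)
\end{equation*}
for every $\tau > 0$ and $\varphi \in \mathcal{S}(B_{g_0}(x_0, r_0))$, where $\psi_n(\delta) \to 0$ as $\delta \to 0$. Inserting the scalar curvature bound $R \geq -r_0^{-2}$ through the $\tau R \varphi^2$ term of $\mathcal{W}$ yields
\begin{equation*}
\boldsymbol{\mu}(B_{g_0}(x_0, r_0), g_0, \tau) \geq -\psi_n(\delta) - \tau / r_0^{2}
\end{equation*}
for all $\tau > 0$. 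Restricting to $\tau \in (0, T]$ with $T = (\delta_1 r_0)^2$ bounds the scalar contribution by $\delta_1^2$, and choosing $\delta$ small enough that $\psi_n(\delta) \leq \delta_1^2$ gives $\boldsymbol{\nu}(B_{g_0}(x_0, r_0), g_0, T) \geq -2\delta_1^2$, which is absorbed into the hypothesis of Theorem \ref{main_result} by a harmless rescaling of $\delta_1$ at the outset.

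The main obstacle is bookkeeping the degradation of constants through the chain \emph{isoperimetric} $\Rightarrow$ \emph{$L^1$-Sobolev} $\Rightarrow$ \emph{$L^2$-Sobolev} $\Rightarrow$ \emph{logarithmic Sobolev}, so that the final error $\psi_n(\delta)$ really does tend to $0$ as $\delta \to 0$. This is a classical chain on $\mathbb{R}^n$ (Federer--Fleming, Bliss--Rosen, Gross, Beckner) and transports to the present Riemannian setting without alteration, because every step is an integral manipulation of functions compactly supported in $B_{g_0}(x_0, r_0)$ that depends only on the isoperimetric profile of this ball with respect to $g_0$, not on any global geometry of the (incomplete) ambient manifold. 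With the almost Euclidean log-Sobolev in hand, the $\boldsymbol{\nu}$-entropy bound and the appeal to Theorem \ref{main_result} are routine.
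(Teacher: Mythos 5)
Your reduction to Theorem \ref{main_result} via a local $\boldsymbol{\nu}$-entropy lower bound is exactly the paper's argument: after rescaling to $r_0=1$ the paper sets $T$ comparable to $\delta^2$, shows $\boldsymbol{\nu}(B_{g_0}(x_0,1), g_0, T) \geq -O(\delta)$ from the isoperimetric and scalar curvature hypotheses, and then invokes Theorem \ref{main_result}. The one substantive difference is that where you propose to re-derive the implication \emph{almost Euclidean isoperimetric constant plus scalar curvature lower bound $\Rightarrow$ almost Euclidean local $\boldsymbol{\mu}$} from the classical Federer--Fleming/Sobolev/log-Sobolev chain, the paper simply quotes Wang's Lemma 3.5, recorded here as Theorem \ref{isoperimetric}, which packages precisely this as $\boldsymbol{\mu}(\Omega,g,\tau) \geq \boldsymbol{\mu}(\tilde\Omega, g_E, \tau\lambda^2) + n\log\lambda - \underline\Lambda\tau$ with $\lambda=\mathbf{I}(\Omega)/\mathbf{I}_n$; combined with $\boldsymbol{\mu}(\tilde\Omega,g_E,\cdot)\geq 0$ this gives the bound in one line, so your sketch is essentially a re-proof of that cited lemma. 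One bookkeeping point deserves tightening: as you note, your choice $T=(\delta_1 r_0)^2$ makes the scalar term alone contribute $-\delta_1^2$, so after adding the isoperimetric defect you land at $\boldsymbol{\nu}\geq -2\delta_1^2$, which does not literally satisfy the hypothesis $\geq -\delta_1^2$ of Theorem \ref{main_result}; shrinking $T$ fixes the entropy bound but then $\delta_1^{-1}\sqrt{T}<r_0$ and the output ball shrinks by a universal factor, which cannot be absorbed into $\epsilon$ (which controls only the time scale). The correct justification for the ``harmless rescaling'' is to observe that the proof of Theorem \ref{main_result} only uses the hypothesis in the form that the $\boldsymbol{\nu}$ lower bound tends to $0$ as $A=\tfrac{1}{10}\delta^{-1}\to\infty$, so replacing $-\delta^2$ there by $-C\delta$ for an absolute constant $C$ changes nothing; both the $O(\delta)$ scalar contribution and the $O(\delta)$ isoperimetric defect are then admissible. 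The paper glosses over the same point with ``follows directly,'' so your proposal matches the paper in both substance and level of detail.
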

	
	The existence of solutions to Ricci flows  on noncompact manifolds with bounded sectional curvature was obtained by Shi \cite{Sh}. However, without imposing any conditions, the existence of the Ricci flows on general
	complete manifolds is expected to be not true. So it is interesting to find the solutions to Ricci flows exist on noncompact manifolds with unbounded curvature under
	some other reasonable conditions; one may see
	Cabezas-Rivas and Wilking \cite{CW}, Chau, Li and Tam \cite{CLT}, Lee and Topping \cite{LP}, Giesen
	and Topping \cite{GT1} \cite{GT2}, Hochard \cite{Ho}, Simon \cite{S1}\cite{S2}, Topping \cite{T1} and the references
	therein for more information.
	As the first application to our pseudolocality theorems for the incomplete case, we can apply they to prove the short-time existence of Ricci flow solutions, with possibly
	unbounded curvature at the initial time.

	\begin{thm}\label{existence}
		For each $\alpha>0$, $n\ge 2$ and $r_0>0$, there exist $\delta=\delta
		(\alpha,n)$ and $\epsilon(\alpha,n)$ with the following properties.
		Suppose $\left(M^n, g_0\right)$ is a smooth complete $n$-dimensional Riemannian manifold  such that  $$R(x) \ge -k $$
		and
		$$
		\left(\operatorname { Area }_{g_0}(\partial \Omega)\right)^n \geq(1-\delta) n^n \omega_n\left(\operatorname{Vol}_{g_0}(\Omega)\right)^{n-1}
		$$
		for any regular domain $\Omega \subset B_{g_0}(x,r_0)$ and all $x\in M$.
		Then there exists a complete smooth Ricci flow $g(t)$ with  $g(0)=g_0$ on
		$M\times [0, (\epsilon r_0')^2 ]$
		satisfying
		$$|R m|(x, t) \leq \frac{\alpha}{t}$$
		and
		$$\inf _{\rho \in\left(0, \alpha^{-1} \sqrt{t}\right)} \frac{\operatorname{Vol}\left(B_{g(t)}(x, \rho)\right)}{\rho^{n}} \geq(1-\alpha) \omega_n$$
		for $(x,t)\in M\times [0, (\epsilon r_0')^2 ]$, where $r_0'=\min\{r_0,\frac{1}{\sqrt{k}}\}$ if $k>0$ and $r_0'=r_0$ if $k\le 0$.
	\end{thm}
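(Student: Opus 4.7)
The strategy is an exhaustion-and-limit argument driven by Theorem~\ref{main_result2}. Since $(M,g_0)$ is complete, Hopf--Rinow makes every $\overline{B_{g_0}(x,r)}$ compact. Choose an exhaustion $\Omega_1\Subset\Omega_2\Subset\cdots$ of $M$ with $d_{g_0}(\Omega_j,M\setminus\Omega_{j+1})>r_0'$ and with smooth boundary hypersurfaces chosen (generically) so that $|Rm|(g_0)$ is locally bounded near $\partial\Omega_{j+1}$. Cap $\overline{\Omega_{j+1}}$ to obtain a closed manifold $\tilde M_j$, and smoothly extend $g_0|_{\Omega_{j+1}}$ to a metric $\tilde g_{0,j}$ on $\tilde M_j$ whose sectional curvature is bounded on $\tilde M_j\setminus\Omega_{j+1}$ uniformly in $j$. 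Hamilton's short-time existence gives a Ricci flow $\tilde g_j(t)$ on $\tilde M_j\times[0,T_j]$ with $T_j$ bounded below independently of $j$.

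For each $x\in\Omega_j$, the ball $B_{g_0}(x,r_0')\subset\Omega_{j+1}$ inherits the scalar lower bound $R\ge -(r_0')^{-2}$ (using $r_0'\le 1/\sqrt k$ when $k>0$) and the almost-Euclidean isoperimetric inequality from the hypotheses. Applying Theorem~\ref{main_result2} to $(\tilde M_j,\tilde g_{0,j})$ at $x$ with parameter $\alpha/2$ and $\eta=1/2$ yields a local Ricci flow on $B_{g_0}(x,r_0'/2)\times[0,(\epsilon r_0'/2)^2]$ satisfying $|Rm|\le \alpha/(2t)+(\epsilon r_0'/2)^{-2}$ and the non-collapsing bound. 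Since $\tilde M_j$ is closed with uniformly bounded initial curvature, Theorem~\ref{main_result2}'s local output coincides with the restriction of the Hamilton flow $\tilde g_j(t)$. Hence $\tilde g_j(t)$ itself obeys the pseudolocality bounds on $\Omega_j$. Restricting to $t\le \tfrac{\alpha}{2}(\epsilon r_0'/2)^2$ absorbs the additive constant into $\alpha/t$, and pseudolocality inside $\Omega_j$ together with Shi's derivative estimates in the uniformly bounded-curvature region $\tilde M_j\setminus\Omega_j$ extends $\tilde g_j(t)$ to the full interval $[0,(\epsilon r_0')^2]$ after a harmless relabeling of $\epsilon$. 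For any fixed $K\Subset M$ and $t_0>0$, the family $\{\tilde g_j(t)\}$ is uniformly bounded-curvature and non-collapsed on $K\times[t_0,(\epsilon r_0')^2]$ once $j$ is large; Hamilton's compactness theorem and a diagonal extraction produce a smooth limit Ricci flow $g(t)$ on $M\times(0,(\epsilon r_0')^2]$ inheriting the asserted estimates.

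Both the continuity $g(t)\to g_0$ as $t\to 0$ and the completeness of $g(t)$ for each $t>0$ follow from the distance-distortion estimate $|d_{g(t)}(p,q)-d_{g_0}(p,q)|\le C\sqrt t$, a consequence of Hamilton's localized distance-change lemma together with $|\mathrm{Rc}|\le (n-1)\alpha/t$; the bound is integrable in $t$ on $[0,(\epsilon r_0')^2]$, so $d_{g(t)}$ converges uniformly to $d_{g_0}$ and the Cauchy completeness is preserved. The main technical obstacle is the uniqueness step identifying Theorem~\ref{main_result2}'s local output with $\tilde g_j(t)$: Chen--Zhu uniqueness compares two complete bounded-curvature flows, whereas the local output lives a priori only on a ball and carries no specified boundary behavior. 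My plan to circumvent this is either to inspect the proof of Theorem~\ref{main_result2} and verify that when the initial data is a closed bounded-curvature manifold the construction reduces to the Hamilton flow restricted to the ball, or to replace the invocation of Theorem~\ref{main_result2} in the argument by Wang's complete-flow $\boldsymbol{\nu}$-pseudolocality applied directly to $\tilde g_j(t)$, after extracting smallness of the localized $\boldsymbol{\nu}$-constant of $\tilde g_{0,j}$ from the scalar and isoperimetric hypotheses via the standard log-Sobolev/isoperimetric equivalence.
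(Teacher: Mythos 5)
Your plan is structurally different from the paper's and has two genuine gaps beyond the one you flag yourself. The paper does not cap off compact exhaustion domains and run Hamilton's closed-manifold flow at all; it applies Corollary~\ref{cor} directly to balls $B_{g_0}(x_0,K_i r_0')$ with $K_i\to\infty$ and $\eta=\tfrac12$, producing for each $i$ a Ricci flow on $B_{g_0}(x_0,\tfrac{K_i-1}{2}r_0')\times[0,\tfrac{(\epsilon r_0')^2}{4}]$ with initial data $g_0$, the curvature bound $|\mathrm{Rm}|\le \alpha/t + 4(\epsilon r_0')^{-2}$, and the non-collapsing bound; Shi-type interior estimates then give subconvergence to a limit flow on all of $M$, and completeness follows from the shrinking lemma (Theorem~\ref{shrinking}). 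The essential feature of Corollary~\ref{cor} that you are not exploiting is that it \emph{constructs} a Ricci flow on an incomplete ball for a time depending only on $(\alpha,n,r_0')$, via the iterated conformal-change argument of Theorem~\ref{main_result}; no closed-manifold flow or uniqueness comparison enters.

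The most serious gap in your approach is the assertion that you can extend $g_0|_{\Omega_{j+1}}$ to a closed manifold with sectional curvature bounded on $\tilde M_j\setminus\Omega_{j+1}$ \emph{uniformly in $j$}. The whole point of the theorem is to allow $|\mathrm{Rm}(g_0)|$ to be unbounded on $M$; then $|\mathrm{Rm}(g_0)|$ near $\partial\Omega_{j+1}$ can blow up as $j\to\infty$, any smooth capping must inherit this, and consequently Hamilton's existence time $T_j$ degenerates to zero. Your subsequent step, ``pseudolocality inside $\Omega_j$ together with Shi's derivative estimates in the uniformly bounded-curvature region $\tilde M_j\setminus\Omega_j$ extends $\tilde g_j(t)$ to the full interval,'' is precisely the hard part of such arguments and cannot be waved through: once $T_j\to 0$, extending the flow requires re-mollifying the time-$T_j$ slice and rerunning, which is the Hochard/Simon--Topping inductive scheme that Theorem~\ref{main_result} already packages. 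Your acknowledged uniqueness obstacle is also genuine: Theorem~\ref{main_result2} is purely an existence statement, and there is no mechanism to identify its output with a Hamilton flow on a capped manifold, so ``plan A'' does not close. ``Plan B'' (apply Wang's complete-flow pseudolocality directly to $\tilde g_j$) fails for the same reason as above -- it needs $\tilde g_j(t)$ to live on a uniform time interval, which the capping cannot guarantee. In short, the missing ingredient is exactly the content of Theorem~\ref{main_result}/Corollary~\ref{cor}, and your proposal is implicitly attempting to re-derive it without the key conformal-change/inductive mechanism.
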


Notice that we can get	from the classical volume comparison theorem that if a complete Riemmannian manifold satisfying $Rc(g)\ge 0$ and $\frac{\operatorname{Vol}_{g}(B(p,r))}{r^n}\ge \omega_n$ for any $r>0$, then
it must be isometric to the Euclidean space. As an analogue, we have the
following rigidity theorem with respect to nonnegative scalar curvature.

\begin{thm}\label{rigidity}
	 Suppose $\left(M^n, g\right)$ is a smooth complete $n$-dimensional Riemannian manifold  such that
	 \begin{equation}\label{1}
	 R(x) \ge 0
	 \end{equation}
	 for all $x \in M$
	and
	\begin{equation}\label{2}
\left(\operatorname { Area }_{g}(\partial \Omega)\right)^n\ge  n^n \omega_n\left(\operatorname{Vol}_{g}(\Omega)\right)^{n-1}	
\end{equation}
		for any regular domain $\Omega \subset M$.
	Then $M$ is isometric to the Euclidean space.
\end{thm}

	With the extra condition that $\left(M, g\right)$ has the bounded sectional curvature, Theorem \ref{rigidity} can be easily obtained by the monotonicity of Perelman's $\mathcal{W}$-functional. Recall the  Perelman's $\mathcal{W}$-functional is defined as
	\begin{align}\label{w_func}
			\mathcal{W}=\int_M [\tau(|\nabla f|^2+ R)+f-n]Hd\mu,
		\end{align}
and we let $H=(4\pi\tau)^{-\frac{n}{2}}e^{-f} $ is the heat kernel of $(\frac{\partial}{\partial \tau}-\Delta+R)H=0$ with $\tau=T-t$.
 If there exists a complete solution to Ricci flow $g(t)$ with bounded sectional curvature on some time interval $[0,T]$, for which can be obtained by Shi local existence theorem \cite{Sh} if $\left(M, g(0)\right)$ has the bounded sectional curvature, then $|\mathcal{W}|<\infty$, $\mathcal{W}\le 0$ and  $\mathcal{W}=0$ at some time if and only
 if $(M,g(t))$ is isometric to the Euclidean space for any $t\in [0,T]$; see \cite{P1}, \cite{RFV3} or \cite{CTY}. Moreover, we have
 $\mathcal{W}\ge 0$ at $t=0$ if $(M,g(0))$ satisfies
 (\ref{1}) and (\ref{2}) and hence  $\mathcal{W}= 0$ at $t=0$
	and $(M,g(0))$ is isometric to the Euclidean space.
	 We also mention that He \cite{He} proved  Theorem \ref{existence} and Theorem \ref{rigidity}  with an extra condition
	$\liminf\limits_{d(x)\to \infty}d(x)^{-2}Rc(x)\ge -C$.

The present paper is organized as follows. In section 2 we recall some results which we shall use in the next sections. In section 3 we give the proofs of Theorem \ref{main_result} and Theorem \ref{main_result2}. In section 4 we give the proofs of Theorem \ref{existence} and Theorem \ref{rigidity}.

	\section{Preliminaries}
	
	In this section we recall some results which we shall use in the next sections. The first of these is a result of
 Li-Yau-Hamilton-Perelman type Harnack inequality by Wang \cite{w1} and Qi.S.Zhang \cite{Z}.
 \begin{thm}[Theorem 4.2 in \cite{w1}, Step 2 in the proof of Theorem 6.3.2 of \cite{Z}]\label{Harnack}
 Suppose $\left(M, g(t))\right|_{0 \leq t \leq T}$ is a complete $n$-dimensional Ricci flow with bounded sectional curvature, $\Omega$ is a bounded domain of $M$ with smooth boundary. Fix $\tau_T>0$, let $\varphi_T$ be the minimizer function of $\boldsymbol{\mu}\left(\Omega, g(T), \tau_T\right)$ for some $\tau_T>0 $. Starting from $u_T=\varphi_T^2$ at time $t=T$, let $u$ solve the conjugate heat equation
$$
\square^* u=\left(-\partial_t-\Delta+R\right) u=0 .
$$
Define
$$
\begin{aligned}
	\tau &:=\tau_T+T-t, \\
	f &:=-\frac{n}{2} \log (4 \pi \tau)-\log u, \\
	v&:=\left\{\tau\left(2 \Delta f-|\nabla f|^2+R\right)+f-n-\mu\right\} u,
\end{aligned}
$$
where $\boldsymbol{\mu}=\boldsymbol{\mu}\left(\Omega, g(T), \tau_T\right)$. Then we have
$$
v \leq 0.
$$
\end{thm}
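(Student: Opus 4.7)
The plan is to adapt Perelman's classical entropy monotonicity argument to the present Dirichlet setting, exploiting that $\varphi_T$ is the minimizer of the local $\mathcal{W}$-functional. The proof naturally splits into three steps: (i) verify $v(T)=0$ pointwise using the Euler--Lagrange equation for $\varphi_T$; (ii) derive the pointwise evolution inequality $\square^* v \le 0$; and (iii) propagate the sign backward in time by a parabolic maximum principle on the complete Ricci flow $(M,g(t))$.

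For step (i), the constrained Euler--Lagrange equation for $\varphi_T$, with Lagrange multiplier fixed by $\int_\Omega \varphi_T^2 \, dvol = 1$, reads
$$-4\tau_T \Delta \varphi_T + \tau_T R \varphi_T - 2\varphi_T \log\varphi_T - \varphi_T = \bigl(\boldsymbol{\mu} + n + \tfrac{n}{2}\log(4\pi\tau_T) - 1\bigr)\varphi_T.$$
Elliptic regularity and the strong maximum principle give $\varphi_T \in C^\infty(\Omega)$ with $\varphi_T > 0$ in $\Omega$. Writing $u_T = \varphi_T^2$ and $f_T = -\log u_T - \frac{n}{2}\log(4\pi\tau_T)$, one checks $\nabla f_T = -2\nabla\varphi_T/\varphi_T$ and the crucial identity $2\Delta f_T - |\nabla f_T|^2 = -4\Delta\varphi_T/\varphi_T$. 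Substituting these into the definition of $v$ and invoking the Euler--Lagrange equation makes every term cancel, yielding $v(T)=0$ on $\Omega$; outside $\bar\Omega$ one has $u_T\equiv 0$ and hence $v(T)\equiv 0$ trivially. For step (ii), I would invoke Perelman's celebrated identity: setting $\tilde v := [\tau(2\Delta f - |\nabla f|^2 + R) + f - n]u$,
$$\square^* \tilde v = -2\tau \left|\, Rc + \nabla^2 f - \tfrac{1}{2\tau}\, g \,\right|^2 u \le 0;$$
since $\square^*(\boldsymbol{\mu} u) = \boldsymbol{\mu}\,\square^* u = 0$, the same bound holds for $v = \tilde v - \boldsymbol{\mu} u$. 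For $t<T$, the strong maximum principle applied to $\square^* u = 0$ gives $u > 0$ smoothly on $M$, so $h := v/u$ is well-defined and $\square^* v \le 0$ translates into the drift-diffusion inequality $\partial_t h + \Delta h - 2\nabla f \cdot \nabla h \ge 0$ (equivalently, in backward time $s=T-t$, a subsolution of a drift heat equation with drift $-2\nabla f$).

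The main obstacle is step (iii): executing the maximum principle on the complete noncompact manifold $M$, where the drift $\nabla f$ need not be bounded. The decisive ingredient is that the bounded sectional curvature of $g(t)$ together with the compactly supported initial datum $u_T$ produces Gaussian upper and lower bounds for $u$, which in turn yield quadratic growth for $f$ and linear growth for $|\nabla f|$. These controls allow the argument to be closed either by a weighted cutoff calculation or by appealing to the maximum principle for drift--diffusion equations on complete manifolds of bounded geometry (cf. Chow--Lu--Ni or Wang's original argument in \cite{w1}). Combined with the terminal condition $h(T)=0$ from step (i), this propagates $h\le 0$ to all of $M\times[0,T]$, hence $v = hu \le 0$, as required.
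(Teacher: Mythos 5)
The paper does not prove this theorem; it is stated as a quoted result from Wang \cite{w1} and Zhang \cite{Z}, so there is no in-paper proof to compare against. Your proposal follows the same scheme those references use: the Euler--Lagrange equation for the minimizer $\varphi_T$, Perelman's pointwise monotonicity $\square^*v\le 0$, and a parabolic maximum principle propagated backward in time. Steps (i) and (ii) are correct as written, in particular the identity $2\Delta f_T-|\nabla f_T|^2=-4\Delta\varphi_T/\varphi_T$ and the observation that $\square^*(\boldsymbol{\mu}u)=0$.

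The gap is in step (iii). Passing to $h=v/u$ and asserting a ``terminal condition $h(T)=0$'' is not legitimate: $u(\cdot,T)=\varphi_T^2$ vanishes identically on $M\setminus\Omega$, so $h(\cdot,T)$ is undefined there, and as $t\to T^-$ at such points the two terms $\tau|\nabla f|^2$ and $f$ each diverge like $d^2/(4\tau)$; hence $\limsup_{t\to T^-}\sup_M h(\cdot,t)\le 0$ requires verifying a precise leading-order cancellation and is not a consequence of step (i) alone. A more robust formulation, and the one actually used in Perelman and Wang, works with $v$ directly: one checks that $v$ extends continuously by zero to $t=T$ on all of $M$ (on $\Omega$ by the Euler--Lagrange equation, on $M\setminus\bar\Omega$ because $u_T\equiv 0$, and across $\partial\Omega$ using $\varphi_T^2\log\varphi_T\to 0$ together with smoothness of $\varphi_T$ up to $\bar\Omega$), and then applies the maximum principle to $\square^*v\le 0$ with $v(\cdot,T)\equiv 0$. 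Either way, the noncompact maximum principle still needs the growth control you only gesture at: Gaussian bounds on $u$ do not by themselves control $\nabla f$ or $\Delta f$, so one also needs gradient and Hessian estimates for conjugate heat solutions in bounded geometry before a weighted-cutoff or Omori--Yau argument can be closed. Those are supplied in \cite{w1}; your plan points in the right direction, but step (iii) as written is a sketch rather than a proof.
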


	Next we recall the the following estimate for local $\boldsymbol{\mu}-$functional by the
	isoperimetric constant and lower bound of scalar curvature.
	
	\begin{thm}[Lemma 3.5 of \cite{w1}] \label{isoperimetric}
		Suppose $\Omega$ is a bounded domain in a Riemannian manifold $\left(M, g\right)$ with its scalar curvature satisfying
		$$R\ge -\underline{\Lambda}\text{\quad on \quad} \Omega.$$
		Let $\tilde{\Omega}$ be a ball in $\left(\mathbb{R}^n, g_E\right)$ such that $Vol(\tilde{\Omega})=Vol(\Omega)$.  Define
		$$
		\lambda:=\frac{\mathbf{I}(\Omega)}{\mathbf{I}_n},
		$$
		where $\mathbf{I}(\Omega)=\inf\limits_{D\Subset \Omega}\frac{Area(\partial D)}{Vol(D)^{\frac{n-1}{n}}}$
		is the isoperimetric constant with respect $g$ and $I_n$ is the
		isoperimetric constant of $n$-dimensional Euclidean space.
		Then we have
		$$
		\boldsymbol{\mu}(\Omega, g, \tau) \geq \boldsymbol{\mu}\left(\tilde{\Omega}, g_E, \tau \lambda^2\right)+n \log \lambda-\underline{\Lambda} \tau.
		$$
	\end{thm}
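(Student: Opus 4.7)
The plan is a symmetrization argument: given a test function $\varphi$ on $\Omega$, replace it by its radially decreasing rearrangement $\tilde\varphi$ on $\tilde\Omega$ and compare $\mathcal{W}(\Omega,g,\varphi,\tau)$ with $\mathcal{W}(\tilde\Omega,g_E,\tilde\varphi,\tau\lambda^2)$ term by term, then take infima on both sides.

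I fix any admissible $\varphi\in\mathcal{S}(\Omega)$ and let $\tilde\varphi$ be the unique nonnegative radial function on $\tilde\Omega$ whose superlevel sets are concentric Euclidean balls with the same volume as those of $\varphi$: $Vol_{g_E}(\{\tilde\varphi>t\})=Vol_g(\{\varphi>t\})$ for all $t>0$. Because rearrangement preserves the distribution function, every purely pointwise integral is invariant, so $\int_{\tilde\Omega}\tilde\varphi^2\,dx=1$ and $-2\int_\Omega\varphi^2\log\varphi\,dvol_g=-2\int_{\tilde\Omega}\tilde\varphi^2\log\tilde\varphi\,dx$. The scalar curvature term is bounded using $R\geq-\underline{\Lambda}$ and $\int\varphi^2=1$, giving $\int_\Omega R\varphi^2\,dvol_g\geq-\underline{\Lambda}$. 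For the Dirichlet energy, the definition of $\mathbf{I}(\Omega)$ and the matching of superlevel-set volumes yield
\begin{equation*}
Area_g(\partial\{\varphi>t\})\;\geq\;\mathbf{I}(\Omega)\,Vol_g(\{\varphi>t\})^{\frac{n-1}{n}}\;=\;\lambda\,Area_{g_E}(\partial\{\tilde\varphi>t\})
\end{equation*}
for a.e.\ $t>0$, and combining the coarea formula with Cauchy--Schwarz in the standard P\'olya--Szeg\"o manner gives $\int_\Omega|\nabla\varphi|^2\,dvol_g\geq\lambda^2\int_{\tilde\Omega}|\nabla\tilde\varphi|^2\,dx$.

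Assembling the three estimates and using the identities $\tau\cdot 4\lambda^2=(\tau\lambda^2)\cdot 4$ and $-\tfrac{n}{2}\log(4\pi\tau)=-\tfrac{n}{2}\log(4\pi\tau\lambda^2)+n\log\lambda$, together with $R_{g_E}\equiv 0$, produces
\begin{equation*}
\mathcal{W}(\Omega,g,\varphi,\tau)\;\geq\;\mathcal{W}(\tilde\Omega,g_E,\tilde\varphi,\tau\lambda^2)+n\log\lambda-\underline{\Lambda}\tau\;\geq\;\boldsymbol{\mu}(\tilde\Omega,g_E,\tau\lambda^2)+n\log\lambda-\underline{\Lambda}\tau.
\end{equation*}
Taking the infimum over $\varphi\in\mathcal{S}(\Omega)$ gives the desired inequality. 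The main obstacle is the P\'olya--Szeg\"o step: one must justify that $\tilde\varphi\in W_0^{1,2}(\tilde\Omega)$ and that the coarea-based derivation goes through with only the constant $\mathbf{I}(\Omega)$ rather than the sharp Euclidean one available in $\Omega$. This is handled by first approximating $\varphi$ by smooth compactly supported functions, invoking Sard's theorem so that almost every level set is a smooth hypersurface contained in some subdomain $D\Subset\Omega$ to which the definition of $\mathbf{I}(\Omega)$ applies, and then passing to the limit.
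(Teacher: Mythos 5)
Your proposal is correct, and the symmetrization argument you use — replacing $\varphi$ by its Schwarz rearrangement $\tilde\varphi$, passing the isoperimetric ratio $\lambda$ into the Dirichlet energy via the coarea/Cauchy–Schwarz (P\'olya–Szeg\"o) comparison, and absorbing $\lambda$ into the parameter $\tau$ through the identities $4\tau\lambda^2 = 4(\tau\lambda^2)$ and $-\tfrac{n}{2}\log(4\pi\tau) = -\tfrac{n}{2}\log(4\pi\tau\lambda^2) + n\log\lambda$ — is the same route taken in the proof of Lemma 3.5 of Wang's paper. The coefficient bookkeeping checks out and yields exactly $\boldsymbol{\mu}(\Omega,g,\tau) \geq \boldsymbol{\mu}(\tilde\Omega,g_E,\tau\lambda^2) + n\log\lambda - \underline{\Lambda}\tau$.
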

	
	The following  result of Hochard  that allows us to conformally change  an incomplete Riemannian
	metric at its extremities in order to make it complete and without changing it in the interior.
	
	\begin{thm} [Corollaire IV.1.2 in \cite{Ho}]\label{Ho}
		There exists $\sigma(n)$ such that given a Riemannian manifold $\left(N^n, g\right)$ with $|\operatorname{Rm}(g)| \leq \rho^{-2}$ throughout for some $\rho>0$, there exists a complete Riemannian metric $h$ on $N$ such that
		
		(1) $h \equiv g$ on $N_\rho:=\left\{x \in N: B_g(x, \rho) \Subset N\right\}$, and
		
		(2) $|\operatorname{Rm}(h)| \leq \sigma \rho^{-2}$ throughout $N$.
	\end{thm}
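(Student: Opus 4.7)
The plan is to construct $h$ as a conformal deformation $h = \phi^2 g$ of $g$, where $\phi$ depends on a regularization of the ``distance to the Cauchy boundary'' of $(N,g)$. By scaling we may assume $\rho = 1$, so $|\operatorname{Rm}(g)| \leq 1$ throughout $N$ and $N_1 = \{x \in N : B_g(x,1) \Subset N\}$. Define
\[
d(x) := \sup\{r > 0 : \overline{B_g(x,r)} \Subset N\} \in (0,\infty].
\]
The triangle inequality shows $d$ is 1-Lipschitz, and $d(x) \geq 1$ precisely on $N_1$. We will choose a smooth positive function $\phi$ with $\phi \equiv 1$ on the region $\{d \geq 1\}$ (securing (1)) and $\phi(x) \sim 1/d(x)$ as $d(x) \to 0$; the latter is what will force metric completeness of $h$, since any $g$-divergent curve of finite $h$-length would have to leave every compact set, forcing $d \to 0$ along it, while the integral $\int \phi \, ds \gtrsim \int ds/d$ diverges.

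The key technical step is smoothing $d$ at a scale which itself depends on $d$. At any $x$ with $d(x) = r \leq 1$, the curvature bound $|\operatorname{Rm}(g)| \leq 1$ together with the precompactness of $B_g(x,r/2)$ gives a uniform harmonic radius on that ball (Jost--Karcher / Anderson), with $C^{1,\alpha}$ control of the metric on balls of size $\sim r$. Using a locally finite cover of $\{d < 1\}$ by such balls and a subordinate partition of unity, we mollify $d$ against bump kernels of radius $\sim \varepsilon d(x)$ to obtain a smooth $\tilde d$ with
\[
\tfrac{1}{2} d \leq \tilde d \leq 2 d, \qquad |\nabla^k \tilde d|_g \leq C_k \, \tilde d^{\,1-k}, \quad k = 1,2,3,
\]
and $\tilde d \equiv d$ on $\{d \geq 1\}$. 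Define $\phi := F(\tilde d)$, where $F : (0,\infty) \to (0,\infty)$ is smooth, equals $1$ on $[1,\infty)$, equals $1/s$ on $(0,1/2]$, and interpolates smoothly with $|s^k F^{(k)}(s)/F(s)| \leq C$ for $k \leq 2$.

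It remains to verify the curvature estimate. From the conformal change formula, the Riemann tensor of $h = \phi^2 g$ satisfies, schematically,
\[
\phi^{-2} \operatorname{Rm}(h) \;=\; \operatorname{Rm}(g) \;+\; \bigl(\nabla^2 \log\phi - d\log\phi \otimes d\log\phi + \tfrac{1}{2}|\nabla \log \phi|_g^2\, g\bigr) \owedge g.
\]
Because a $(0,4)$-tensor has $|T|_h = \phi^{-4}|T|_g$, we get $|\operatorname{Rm}(h)|_h = \phi^{-2}\,|\,\operatorname{Rm}(g) + (\cdots)\owedge g\,|_g$. The chain rule gives $|\nabla \log \phi|_g \leq C/\tilde d$ and $|\nabla^2 \log \phi|_g \leq C/\tilde d^{\,2}$; on $\{\tilde d < 1\}$ we have $\phi^{-2} \asymp \tilde d^{\,2}$, and on $\{\tilde d \geq 1\}$ we have $\phi \equiv 1$ and $h = g$. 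In either region the right-hand side is uniformly bounded (by $|\operatorname{Rm}(g)|_g \leq 1$ plus a dimensional constant), giving $|\operatorname{Rm}(h)|_h \leq \sigma(n)$.

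The main obstacle is the scale-adapted smoothing of $d$: the derivatives of $\tilde d$ must be controlled at the natural scale $d(x)$, which is exactly the scale where $g$ is a priori under control by the curvature bound, so the mollification must be engineered carefully (via partition of unity in harmonic coordinates whose size is comparable to $d(x)$). Once this smoothing is in hand, completeness is automatic from $\phi \gtrsim 1/d$ and the curvature bound reduces to a bookkeeping exercise in the conformal change formula.
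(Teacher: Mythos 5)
The paper offers no proof of this statement to compare against: it is quoted as Corollaire IV.1.2 of Hochard's thesis \cite{Ho}. Your overall strategy --- a conformal stretching $h=\phi^2g$ with $\phi\equiv1$ on $\{d\ge\rho\}\supseteq N_\rho$ and $\phi\simeq 1/d$ near the incomplete ends, completeness from the divergence of $\int ds/d$ along any finite-length escaping curve, and the curvature bound from the conformal change formula once a regularized distance $\tilde d$ with $|\nabla\tilde d|\le C$, $|\nabla^2\tilde d|\le C/\tilde d$ is in hand --- is indeed the strategy behind Hochard's construction, and those parts of your argument are sound.

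There is, however, a genuine gap at exactly the step you flag as the key one. You obtain the scale-adapted smoothing of $d$ by mollifying in harmonic coordinates of size $\sim d(x)$, citing Jost--Karcher/Anderson; but harmonic radius lower bounds require an injectivity radius (or volume) lower bound in addition to $|\operatorname{Rm}|\le 1$, and no such bound is available here. The situation this lemma is built for is precisely the possibly collapsed one --- e.g.\ the thin cylinder $\mathcal{S}^1(r)\times[-1,1]$ with $r$ arbitrarily small in Topping's example recalled in the introduction --- where no coordinate chart of size comparable to $d(x)$ with $C^{1,\alpha}$ control exists, so the covering you describe cannot be produced. The standard repair is to work on local covers instead: under $|\operatorname{Rm}|\le1$, $\exp_x$ is defined and is an immersion on the ball of radius $\min\{\pi,d(x)\}/2$ in $T_xN$, and the pulled-back metric there has injectivity radius at the origin bounded below by that scale; one then mollifies $d\circ\exp_x$ upstairs (a Stein-type regularized-distance construction valid under only a curvature bound), checking that the resulting function of $x$ is smooth with the stated bounds. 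A second, smaller slip: you cannot ask the smooth $\tilde d$ to coincide with $d$ on $\{d\ge1\}$, since $d$ is in general merely Lipschitz there; it suffices to arrange $(1-\epsilon)d\le\tilde d\le(1+\epsilon)d$ and take $F\equiv1$ on $[1-\epsilon,\infty)$, which gives $h\equiv g$ on all of $\{d\ge1\}\supseteq N_\rho$ --- condition (1) only requires equality on $N_\rho$, so leaving $g$ untouched on a larger set is harmless, at the cost of running the curvature estimates on $\{\tilde d<1-\epsilon\}$ with slightly different constants.
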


	We also recall the following lemma, which is one of the local ball inclusion results based on the distance distortion estimates of Hamilton and Perelman.
	
	\begin{thm}[Lemma 8.3 of \cite{P1}, Section of in \cite{H2}, Corollary 3.3 of \cite{T2} ]\label{shrinking}
		There exists a constant $\gamma=\gamma(n) $ depending only on $n$ such that the following is true. Suppose $\left(N^n, g(t)\right)$ is a Ricci flow for $t \in[0, S]$ with $g(0)=g_0$ and $x_0 \in N$ with $B_{g_0}\left(x_0, r\right) \Subset N$ for some $r>0$, and $\operatorname{Rc}(g(t)) \leq \frac{a}{t}$ on $B_{g_0}\left(x_0, r\right)$ for each $t \in(0, S]$. Then
		$$d_{g_0}(x,x_0)\le d_{g(t)}(x,x_0)+\gamma\sqrt{at}$$ on $B_{g_0}\left(x_0, r\right)$
		and hence
		$$
		B_{g(t)}\left(x_0, r-\gamma \sqrt{a t}\right) \subset B_{g_0}\left(x_0, r\right).
		$$	
	\end{thm}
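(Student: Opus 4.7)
The plan is to follow the classical Hamilton--Perelman distance-distortion argument: establish a forward-in-time lower bound of the form $\tfrac{d^+}{dt}d_{g(t)}(x,x_0)\ge -C(n)\sqrt{a/t}$ via Perelman's second-variation-with-cutoff estimate, and then integrate in $t$. The key feature is that the $\sqrt{a/t}$ decay is borderline integrable, with antiderivative exactly $2\sqrt{at}$, which is what produces the $\gamma\sqrt{at}$ distortion. A short bootstrap is needed to ensure that the minimizing geodesics we work with stay inside $B_{g_0}(x_0,r)$, where the Ricci hypothesis is available.

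At a time $t_0\in(0,S]$ and a point $x$ at which $d_{g(t_0)}(\cdot,x_0)$ is smooth, let $\sigma:[0,L]\to N$ be the minimizing $g(t_0)$-geodesic from $x_0$ to $x$, and let $\{e_i\}_{i=1}^{n-1}$ be parallel orthonormal fields perpendicular to $\sigma'$. Pick a piecewise linear cutoff $\phi:[0,L]\to[0,1]$ vanishing at the endpoints, rising linearly on $[0,r_0]$, equal to $1$ on $[r_0,L-r_0]$, and falling linearly on $[L-r_0,L]$, and substitute the variations $\phi e_i$ into the second variation inequality for the minimizing geodesic. Summing over $i$ yields
\[
\int_0^L \phi^2\operatorname{Ric}(\sigma',\sigma')\,ds \;\le\; (n-1)\!\int_0^L |\phi'|^2\,ds \;=\; \tfrac{2(n-1)}{r_0}.
\]
Assuming $\sigma\subset B_{g_0}(x_0,r)$ so that $\operatorname{Ric}\le a/t_0$ holds along $\sigma$, split $\int_0^L\operatorname{Ric} = \int \phi^2\operatorname{Ric} + \int(1-\phi^2)\operatorname{Ric}$: the first term is bounded by $2(n-1)/r_0$ and the second is supported on the two $r_0$-collars where the pointwise Ricci bound gives $\int(1-\phi^2)\operatorname{Ric}\le (4a/3)\,r_0/t_0$. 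The parabolic choice $r_0=\sqrt{t_0/a}$ balances the two contributions, and combining with the first variation of distance under Ricci flow $\tfrac{d^+}{dt}d_{g(t)}(x,x_0)=-\int\operatorname{Ric}(\sigma',\sigma')\,ds$ (at smooth points) yields
\[
\tfrac{d^+}{dt}\bigg|_{t=t_0}\!d_{g(t)}(x,x_0)\;\ge\;-C(n)\sqrt{\tfrac{a}{t_0}}.
\]

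A bootstrap continuity argument now propagates the estimate and verifies its premise $\sigma\subset B_{g_0}(x_0,r)$. Set $\gamma_0:=2C(n)$ and let $T_*\in[0,S]$ be the supremum of those $t$ for which $d_{g_0}(y,x_0)\le d_{g(s)}(y,x_0)+\gamma_0\sqrt{as}$ holds for every $s\in[0,t]$ and every $y\in B_{g_0}(x_0,r)$. For $s<T_*$ and any $y\in B_{g(s)}(x_0,r-\gamma_0\sqrt{as})$, a standard first-exit argument using the standing inequality at time $s$ shows that any $g(s)$-minimizer $\sigma$ from $x_0$ to $y$ stays in $B_{g_0}(x_0,r)$: if $s_*$ were the first parameter at which $\sigma(s_*)\in\partial B_{g_0}(x_0,r)$, the standing inequality applied at $\sigma(s_*)$ would give $r=d_{g_0}(\sigma(s_*),x_0)\le d_{g(s)}(\sigma(s_*),x_0)+\gamma_0\sqrt{as}<r$, a contradiction. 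Hence the derivative estimate applies, and integrating $\tfrac{d^+}{dt}d_{g(t)}(y,x_0)\ge -C(n)\sqrt{a/t}$ from $0$ to $t$ yields $d_{g(t)}(y,x_0)\ge d_{g_0}(y,x_0)-2C(n)\sqrt{at}$, which preserves the strict inequality past $T_*$; the standard open/closed argument forces $T_*=S$ in the nontrivial range $\gamma_0\sqrt{aS}<r$. The ball inclusion $B_{g(t)}(x_0,r-\gamma\sqrt{at})\subset B_{g_0}(x_0,r)$ is then immediate by contrapositive (and trivial when $\gamma\sqrt{at}\ge r$).

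The main technical obstacle is the scale balancing in the second variation step: a naive application of $\operatorname{Ric}\le a/t$ integrated over $\sigma$ produces a derivative bound proportional to the geodesic length $L$, which is both geometrically wasteful and fails to be integrable at $t=0$. Localizing the Ricci contribution to two collars of length $\sqrt{t/a}$, the natural parabolic scale of the problem, converts the singular $1/t$ Ricci bound into the borderline-integrable $\sqrt{a/t}$ derivative bound. A secondary subtlety is the possible non-differentiability of $t\mapsto d_{g(t)}(y,x_0)$ at cut times of $y$; this is handled by Hamilton's barrier trick, replacing the distance by the $g(t)$-length of a fixed reference curve between $x_0$ and $y$ and obtaining the forward derivative bound in the barrier sense.
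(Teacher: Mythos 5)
The paper does not prove this statement; it is quoted with citations to Perelman's Lemma 8.3, Hamilton's distance distortion estimate, and Simon--Topping's Shrinking Balls Lemma, and your argument is a faithful reconstruction of exactly that standard chain of reasoning: Perelman's localized second variation, the parabolic balancing $r_0=\sqrt{t_0/a}$, the borderline-integrable bound $-C(n)\sqrt{a/t}$, and a bootstrap to keep the minimizers inside $B_{g_0}(x_0,r)$. The computations check out: $\int|\phi'|^2=2/r_0$, $\int(1-\phi^2)=4r_0/3$, $C(n)=2(n-1)+4/3$, $\gamma=2C(n)$.

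Two points are compressed and worth being more careful about, although neither is a fundamental gap. First, the collar construction tacitly assumes $L\ge 2r_0$; when $L<2r_0=2\sqrt{t_0/a}$ the collars overlap and the cutoff blows up as $L\to 0$, but in that regime one should simply use the crude bound $-\!\int_0^L\operatorname{Ric}(\sigma',\sigma')\,ds\ge -L\cdot a/t_0>-2\sqrt{a/t_0}$, which is even better than the long-geodesic constant. Second, the integration step at the end cannot be done in one stroke, because the derivative estimate is only available at those times $s$ at which $y$ lies in the shrinking ball $B_{g(s)}(x_0,r-\gamma_0\sqrt{as})$; this is not automatic for a fixed $y\in B_{g_0}(x_0,r)$ over the whole interval. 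The standard fix (which your ``open/closed argument'' alludes to but should be made explicit) is a first-failure-time argument applied to $F(s):=d_{g(s)}(y,x_0)+\gamma_0\sqrt{as}-d_{g_0}(y,x_0)$: on any interval $(s_1,s_0]$ where $F<0$ one automatically has $y\in B_{g(s)}(x_0,r-\gamma_0\sqrt{as})$, so the differential inequality applies there and forces $F(s_0)\ge F(s_1)$, contradicting $F(s_0)<0$. Relatedly, the quantity you call $\tfrac{d^+}{dt}d_{g(t)}$ should really be a lower-left Dini derivative (the length $L(\sigma,g(t))$ of the fixed $g(t_0)$-minimizer touches $d_{g(t)}$ from above at $t_0$, which controls $D_-d$ rather than $D^+d$); the Hamilton barrier trick you mention is precisely the correct resolution, just state it in the right direction.
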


 We also need the following lemma, which is a slight generalization of Theorem
	5.4 by Wang \cite{w1}, allows us to
	estimate the local $\boldsymbol{\nu}$-functional values under the Ricci flow.
	
	\begin{lem}\label{key_lemma}
		Let $\left\{\left(M, g(t)\right), s_1 \leq t \leq s_2 \right\}$ be a complete Ricci flow solution with bounded sectional curvature satisfying
		\begin{equation}
			\quad t \cdot R c(x, t) \leq(n-1) A, \quad \forall x \in B_{g(t)}\left(x_0, \sqrt{t}\right).
		\end{equation}
		Then for any $0\le s_1<s_2\le 1$, $\tau_1>0$, $0\le B<\frac{1}{2}$ and $0\le D<8-20B$, we have
		\begin{equation}\label{key}
			\boldsymbol{\nu}\left(\Omega_{s_2}^{\prime\prime}, g(s_2), \tau_{1}+1-s_2\right)-\boldsymbol{\nu}\left(\Omega_{s_1}, g(s_1), \tau_1+1-s_1\right) \geq-\left\{\frac{\tau_{1}+1}{10 A^2B^2}+e^{-1}\right\} \cdot\left\{e^{\frac{s_2-s_1}{10 A^2B^2}}-1\right\},
		\end{equation}
		where  $\Omega_{s_2}^{\prime\prime}=B_{g(s_2)}\left(x_0, 10A(1-2B)-2 A \sqrt{s_2}-DA\right)$ and $\Omega_{s_1}=B_{g(s_1)} \left(x_0, 10A-2 A \sqrt{s_1}-DA \right)$.
	\end{lem}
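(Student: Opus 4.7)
The natural strategy is to transport a minimizer of $\boldsymbol{\mu}$ from time $s_2$ back to time $s_1$ via the conjugate heat equation, then use the Harnack inequality of Theorem \ref{Harnack} together with a spatial cutoff to obtain a valid test function for $\boldsymbol{\mu}$ at time $s_1$. Fix $s\in(0,\tau_1+1-s_2]$ and let $\varphi$ be a minimizer for $\boldsymbol{\mu}(\Omega_{s_2}'',g(s_2),s)$; extend $\varphi$ by zero and let $u$ solve the backward conjugate heat equation $\square^{*}u=0$ on $M\times[s_1,s_2]$ with $u(\cdot,s_2)=\varphi^2$. With $\tau(t):=s+s_2-t$ one has $\tau(s_1)=s+s_2-s_1\in(0,\tau_1+1-s_1]$, so the time parameter of the test function at $s_1$ lies in the allowed range of $\boldsymbol{\nu}(\Omega_{s_1},g(s_1),\tau_1+1-s_1)$.

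Next, construct a time-dependent cutoff $\eta(\cdot,t)$ equal to $1$ on $\Omega_t'':=B_{g(t)}(x_0,10A(1-2B)-2A\sqrt{t}-DA)$ and supported in $\Omega_t:=B_{g(t)}(x_0,10A-2A\sqrt{t}-DA)$, with $|\nabla\eta|^2\lesssim (AB)^{-2}$. Theorem \ref{shrinking}, applied to the Ricci flow on sub-intervals of $[s_1,s_2]$ using the hypothesis $t\cdot Rc\le(n-1)A$, shows that the subtraction $-2A\sqrt{t}$ in the radii compensates the shrinking of $g(t)$-balls relative to the $g(s_1)$-ball, so that the inclusions $\Omega_{s_2}''\subset\Omega_{s_1}$ and $\operatorname{supp}\eta(\cdot,t)\subset\Omega_{s_1}$ hold simultaneously for every $t\in[s_1,s_2]$. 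The restrictions $B<1/2$ and $D<8-20B$, combined with $s_2\le 1$, ensure that $\Omega_t''$ has positive radius throughout $[s_1,s_2]$, so the construction is non-degenerate.

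Using $\tilde\varphi(t):=\eta(t)\sqrt{u(t)}/\|\eta(t)\sqrt{u(t)}\|_{L^2(g(t))}$ as a test function for $\boldsymbol{\mu}(\Omega_{s_1},g(s_1),\tau(s_1))$, one expands $\mathcal{W}$ at $t=s_1$ and compares to $\mathcal{W}(\Omega_{s_2}'',g(s_2),\varphi,s)=\boldsymbol{\mu}(\Omega_{s_2}'',g(s_2),s)$ through the identity supplied by Theorem \ref{Harnack}: since $v\le 0$ pointwise, integrating $\int v\,dvol_{g(t)}$ against $1$ encodes the non-localized monotonicity of $\mathcal{W}$ along the conjugate heat flow, and only the cutoff contributes a defect. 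This defect has three sources, namely the gradient piece $\tau\int|\nabla\eta|^2 u$ controlled by $(AB)^{-2}(\tau_1+1)\int_{\{\eta<1\}}u$, a renormalization piece from $\int\eta^2 u\ne 1$, and a log piece bounded via $-x\log x\le e^{-1}$ on $[0,1]$. Combining these produces the Gronwall-form inequality $\tfrac{d}{dt}\mathcal{W}(\Omega_t,g(t),\tilde\varphi(t),\tau(t))\ge -\tfrac{1}{10A^2B^2}\bigl(\mathcal{W}+\tau_1+1\bigr)-e^{-1}$, whose integration on $[s_1,s_2]$ yields precisely the exponential error in \eqref{key}. Finally, taking the infimum over $s\in(0,\tau_1+1-s_2]$ on the left converts $\boldsymbol{\mu}(\Omega_{s_2}'',g(s_2),s)$ into $\boldsymbol{\nu}(\Omega_{s_2}'',g(s_2),\tau_1+1-s_2)$, while the right-hand side is bounded below by $\boldsymbol{\nu}(\Omega_{s_1},g(s_1),\tau_1+1-s_1)$ because $\tau(s_1)\le\tau_1+1-s_1$.

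The main obstacle is the quantitative accounting of the cutoff errors in the third step: one must show that the mass of $u$ in the annular region $\Omega_t\setminus\Omega_t''$ stays small enough to absorb the $(AB)^{-2}$ blow-up of $|\nabla\eta|^2$, that the renormalization constant $\int\eta^2 u$ stays close enough to $1$ that its contribution is lower order, and that after all cancellations the errors fit cleanly into the specific Gronwall form with coefficient $(10A^2B^2)^{-1}$ and additive constant $e^{-1}$ required by \eqref{key}. The remaining arguments --- the standard approximation justifying use of the $W_0^{1,2}$ minimizer $\varphi$ as initial datum, the pointwise inequality $v\le 0$, and the passage from $\boldsymbol{\mu}$ to $\boldsymbol{\nu}$ by taking infimum --- are routine once the cutoff bookkeeping is in place.
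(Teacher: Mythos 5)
Your high-level skeleton is right (conjugate heat flow, Harnack $v\le0$, spatial cutoff, then infimum over the time parameter), but the heart of the argument is missing and the Gronwall step you propose is both unjustified and, even if it were true, would not produce inequality \eqref{key}.

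First, the missing ingredient. The error term in \eqref{key} is governed by the ratio
$\int_{\Omega_{s_1}\setminus\Omega'_{s_1}} u(\cdot,s_1)\, \big/ \int_{\Omega'_{s_1}} u(\cdot,s_1)$,
and the entire content of the lemma is a quantitative lower bound for $\int_{\Omega'_{s_1}}u(\cdot,s_1)$. You flag this as ``the main obstacle'' but offer no mechanism. The paper supplies one: it builds a \emph{separate} time-dependent barrier
$H(x,t)=e^{-(t-s_1)/(10A^2B^2)}\psi\bigl((d_{g(t)}(x,x_0)+2A\sqrt{t}+10AB+DA)/(10A)\bigr)$,
checks $(\partial_t-\Delta)H\le 0$ via the Hamilton--Perelman estimate $(\partial_t-\Delta)(d_{g(t)}+A/\sqrt{t})\le 0$, and then $\frac{d}{dt}\int_M uH\le 0$ gives $\int_{\Omega'_{s_1}}u|_{s_1}\ge e^{-(s_2-s_1)/(10A^2B^2)}$. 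This supersolution trick is precisely what produces the factor $e^{(s_2-s_1)/(10A^2B^2)}-1$ in the statement; without it the annulus mass is uncontrolled. Note also that in the paper the cutoff $h$ used in the test function is \emph{fixed}, built from $d_{g(s_1)}$ only; the time dependence lives entirely in the auxiliary barrier $H$, not in the test function as you propose.

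Second, the Gronwall step. You assert
$\frac{d}{dt}\mathcal{W}(\Omega_t,g(t),\tilde\varphi(t),\tau(t))\ge -\frac{1}{10A^2B^2}(\mathcal{W}+\tau_1+1)-e^{-1}$
but give no derivation of the $\mathcal{W}$-dependence of the right-hand side, and the three defect terms you list are all $\mathcal{W}$-independent. More importantly, even granting such an inequality, setting $C=1/(10A^2B^2)$ and integrating gives
$\boldsymbol{\mu}_{s_1}\le e^{C(s_2-s_1)}\boldsymbol{\mu}_{s_2}+(e^{C(s_2-s_1)}-1)(\tau_1+1+e^{-1}/C)$,
which is \emph{not} \eqref{key}: the exponential factor multiplies $\boldsymbol{\mu}_{s_2}$ itself, so when $\boldsymbol{\mu}_{s_2}<0$ (as it generally is in this setting, since the hypotheses only give a small negative lower bound) you lose. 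The paper never forms or integrates a differential inequality in $\mathcal{W}$; it performs a single comparison at $t=s_1$ in which $v\le 0$ is used once, inside the expansion
$\boldsymbol{\mu}_{s_1}\le\boldsymbol{\mu}_{s_2}+\bigl\{\log S+\tfrac{1}{S}\int vh\bigr\}+\tfrac{1}{S}\int\{4\tau_{s_1}|\nabla\sqrt h|^2-h\log h\}u$,
and the additive constant (not exponential prefactor) in front of $\boldsymbol{\mu}_{s_2}$ is exactly what the one-shot computation delivers. You should discard the Gronwall formulation and instead supply the supersolution $H$ and carry out the single time-$s_1$ expansion as above.
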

	\begin{proof}
		We follow the idea of \cite{w1}.
	Let $\psi$ be a cut-off function such that $\psi \equiv 1$ on $(-\infty, 1-B), \psi \equiv 0$ on $(1, \infty)$ and $-\frac{10}{B} \leq \psi^{\prime} \leq 0$ everywhere. Moreover, $\psi$ satisfies
		$$
		\psi^{\prime \prime} \geq-\frac{10}{B^2} \psi, \quad\left(\psi^{\prime}\right)^2 \leq \frac{10}{B^2} \psi.
		$$
		To construct $\psi$ we can take 	
		$$
		\psi(y)= \begin{cases}1, &  y \le 1-B ; \\
			1-\frac{2}{B^2}(y-1+B)^2,	  & 1-B \le  y \le 1-\frac{B}{2} ;\\
			\frac{2}{B^2}(y-1)^2, & 1-\frac{B}{2} \le  y \le  1; \\
			0, & y\ge 1 .,
		\end{cases}
		$$	
		and smooth it slightly.	
		Setting $$h(x)=\psi\left(\frac{d_{g(s_1)}(x,x_0)+2 A \sqrt{s_1}+DA}{10 A}\right).$$  For each $t \in[0,1]$, we define
		$\Omega_t:=B_{g(t)}\left(x_0, 10 A-2 A \sqrt{t}-DA\right),  \Omega_t^{\prime}:=B_{g(t)}\left(x_0, 10A(1-B)-2 A \sqrt{t}-DA \right)$.
		It follows from the definition that
		$$
		h(x)= \begin{cases}1, & \forall x \in \Omega_{s_1}^{\prime} ; \\ 0, & \forall x \in M \backslash \Omega_{s_1} .\end{cases}
		$$
		Then we have
		$$
		|\nabla \sqrt{h}|_{g(s_1)}^2=\frac{|\nabla h|_{g(s_1)}^2}{4 h}=\frac{\left(\psi^{\prime}\right)^2}{400 A^2 \psi} \leq \frac{1}{40 A^2B^2} .
		$$
		Next we define
		$$H(x,t)=e^{-\frac{t-s_1}{10A^2B^2}}\psi\left(\frac{d_{g(t)}(x,x_0)+2 A \sqrt{t}+10AB+DA}{10 A}\right)$$
		and 	 $ \Omega_t^{\prime\prime}:=B_{g(t)}\left(x_0, 10A(1-2B)-2 A \sqrt{t}-DA \right)$. Hence
		$$
		H(x,t)= \begin{cases}e^{-\frac{t-s_1}{10A^2B^2}}, & \forall x \in \Omega_{t}^{\prime\prime} ; \\ 0, & \forall x \in M \backslash \Omega^{\prime}_{t} .\end{cases}
		$$
		We have
		\begin{align*}
			&\left(\frac{\partial}{\partial t}-\Delta\right)\psi\left(\frac{d_{g(t)}(x,x_0)+2 A \sqrt{t}+10AB+DA}{10 A}\right)\\
			=&\frac{1}{10 A}\left(\left(\frac{\partial}{\partial t}-\Delta\right) d_{g(t)}(x,x_0)+\frac{A}{\sqrt{t}}\right) \psi^{\prime}-\frac{1}{(10 A)^2} \psi^{\prime \prime} \leq \frac{\psi}{10 A^2B^2}
		\end{align*}
		where we use $\left(\frac{\partial}{\partial t}-\Delta\right) d_{g(t)}(x,x_0)+\frac{A}{\sqrt{t}}\ge 0$ (see Lemma 8.3 in \cite{P1} or Section 17 in \cite{H2}). Then we have $\left(\frac{\partial}{\partial t}-\Delta\right) H \leq 0$.
		
		Let $\varphi$ be a minimizer for $\boldsymbol{\mu}_{s_2}=\boldsymbol{\mu}\left(\Omega_{s_2}^{\prime\prime}, g(s_2), \tau_1'+1-s_2\right)$ for some number $\tau_1'\in(s_2-1,\tau_1]$. Starting from $u_{s_2}=\varphi^2$, we solve the equation $\left(-\frac{\partial}{\partial t}-\Delta+R\right) u=0$ on $[s_1,s_2]$. Thus, we have
		$$
		\frac{d}{d t} \int_M u H=\int_M\left\{u\left(\frac{\partial}{\partial t}-\Delta\right) H+H \left(\frac{\partial}{\partial t}+\Delta-R\right)u\right\} \leq 0 .
		$$
		Since $u_{s_2}=0$
		outside of $\Omega_{s_2}^{\prime\prime}$ and integrate the above inequality yields that
		$$
		\left.\int_M u H\right|_{t=s_1} \geq\left.\int_M u H\right|_{t=s_2}=\left.\int_{\Omega_{s_2}^{\prime\prime}} u H\right|_{t=s_2}=\left.e^{-\frac{s_2-s_1}{10 A^2B^2}} \int_{\Omega_{s_2}^{\prime\prime}} u\right|_{t=s_2}=e^{-\frac{s_2-s_1}{10 A^2B^2}} .
		$$
		It follows that
		$$
		1 \geq\left.\int_{\Omega_{s_1}^{\prime}} u\right|_{t=s_1} \geq\left.\int_M u H\right|_{t=s_1} \geq e^{-\frac{s_2-s_1}{10 A^2B^2}},
		$$
		where we use $H\le 1$ at $t=s_1$ and $H \equiv 0$ outside of $\Omega_{s_1}^{\prime}$. Then we conclude that
		$\left.\int_{\Omega_{s_1}^{\prime}}u\right|_{t=s_1}\geq e^{-\frac{s_2-s_1}{10 A^2B^2}}$.
		We define
		 $S:=\left.\int_{\Omega_{s_1}} u h\right|_{t=s_1}  \leq\left.\int_M u\right|_{t=s_1}=1$, $v=\left\{\tau\left(2 \Delta f-|\nabla f|^2+R\right)+f-n-\mu_{s_2}\right\} u$ as in Theorem \ref{Harnack} and $\tilde{u}=\frac{u h}{S}$. Then  $\int_M \tilde{u}|_{t=s_1}=1$ and $\tilde{u}$ is supported on $\Omega_{s_1}$ at $t=s_1$. Denote
		$\tilde{f} =-\log \tilde{u}-\frac{n}{2} \log \left(4 \pi \tau_{s_1}\right)=f-\log h+\log S		$
		with $\tau_{s_1}=\tau'_1+1-s_1$, $\boldsymbol{\mu}_{s_1}=\boldsymbol{\mu}\left(\Omega_{s_1}, g(s_1), \tau_1'+1-s_1\right)$ and
		$\boldsymbol{\mu}_{s_2}=\boldsymbol{\mu}\left(\Omega_{s_2}^{\prime\prime}, g(s_2), \tau_1'+1-s_2\right)$.	We obtain
\begin{equation}\label{key2}
\begin{aligned}
	\boldsymbol{\mu}_{s_1}&\le \int_{\Omega_{s_1}}\left\{\tau_{s_1}\left(R+2 \Delta \tilde{f}-|\nabla \tilde{f}|^2\right)+\tilde{f}-n\right\} \tilde{u}|_{t=s_1} \\
	&=\boldsymbol{\mu}_{s_2}+\left\{\log S+\frac{1}{S} \int_{\Omega_{s_1}} v h|_{t=s_1} \right\}+\frac{1}{S} \int_{\Omega_{s_1}}\left\{4 \tau_{s_1}|\nabla \sqrt{h}|^2-h \log h\right\} u|_{t=s_1}  \\
	&\le\boldsymbol{\mu}_{s_2}+\frac{1}{S} \int_{\Omega_{s_1}\backslash \Omega'_{s_1}}\left\{4 \tau_{s_1}|\nabla \sqrt{h}|^2-h \log h\right\} u|_{t=s_1}  \\
	&\le\boldsymbol{\mu}_{s_2}+\left\{\frac{\tau'_{1}+1}{10 A^2B^2}+e^{-1}\right\} \cdot\frac{\int_{\Omega_{s_1}\backslash \Omega'_{s_1}} u|_{t=s_1} }{\int_{ \Omega'_{s_1}} u|_{t=s_1} }  \\
	&\leq \boldsymbol{\mu}_{s_2}+ \left\{\frac{\tau'_{1}+1}{10 A^2B^2}+e^{-1}\right\} \cdot\left\{e^{\frac{s_2-s_1}{10 A^2B^2}}-1\right\},
\end{aligned}
\end{equation}
where we use $v\le 0$ by Theorem \ref{Harnack}, $h\equiv 1$ on $\Omega'_{s_1}$ and $S\le 1$ in the above inequalities. Then (\ref{key})
follows by taking infimum of $\tau_1'$ on $(s_2-1,\tau_1]$ in (\ref{key2}).

	\end{proof}

	\section{The proofs of pseudolocality theorems on incomplete manifolds}	
	
		Before present the proofs of Theorem \ref{main_result},
	we sketch our strategy for the proofs.
	In order to construct a local Ricci
	flow in Theorem \ref{main_result}, we do the conformal changing method inductively which was introduced in \cite{T3}, one may also see \cite{Lai} and \cite{LP} for the use of this method.
	 The process starts by doing a
	conformal change to the initial metric, making it a complete metric with bounded curvature
	and leaving it unchanged on a smaller region, and then run a complete Ricci flow up to a
	short time by using Shi’s classical existence theorem from \cite{Sh}. Next we do the conformal
	change to the metric again and repeating the process.
		This process led to
	define sequences of times $t_k$ and radii $r_k$ inductively:
	$t_{k+1}=\left(1+C_1\right) t_k,
	r_{k+1}=r_k-C_2 t_k^{\frac{1}{2}}$ with  uniform constants $C_1$ and $C_2$. In each step, by the Shi's short-time existence theorem \cite{Sh} and $|Rm(g(t_k))|\le \frac{\alpha}{t_k}$ by the inductive assumption we can get a prior estimate
	\begin{equation}\label{prior_estimates}
		|Rm(g(t))|\le \frac{Q}{t}
	\end{equation}
	on $[t_k,t_{k+1}]$ in a smaller region  for some possibly large constant $Q$.
	And the key step in our proof is to prove the
	local $\boldsymbol{\nu}$-functional keeps almost Euclidean in the above process which will imply
	\begin{equation}
		|Rm(g(t))|\le \frac{\alpha}{t}
	\end{equation}
	on $[t_k,t_{k+1}]$ in a smaller region (see Theorem \ref{key_lemma}) which lead the induction  to the next step.	
	Notice that
	the estimates for local $\nu$-functional values under the Ricci flows in Lemma \ref{key_lemma}) only hold on complete and smooth case; see Theorem \ref{Harnack}.    We should estimate the difference of the local $\nu$-functional values  on
	each step and prove the sum of these difference is  almost Euclidean.

	Now we can give the proofs of Theorem \ref{main_result} and Theorem \ref{main_result2}.
	
	\textbf{Proof of Theorem \ref{main_result}.}
	Firstly, an immediate consequence of Lemma \ref{Ho} and Shi's existence theorem for Ricci flows starting with complete initial metrics of bounded curvature \cite{Sh} is the following :
	If $\left(N^n, h_0\right)$ is a smooth manifold (not necessarily complete) that satisfies $\left|\operatorname{Rm}\left(h_0\right)\right| \leq \rho^{-2}$ throughout for some $\rho>0$, then there exist constants $\beta(n)$, $\Lambda(n)$ and a complete smooth Ricci flow $h(t)$ on $N$ for $t \in\left[0, \beta \rho^2\right]$ such that $h(0)=h_0$ on $N_\rho=\left\{x \in N: B_{h_0}(x, \rho) \Subset N\right\}$ and $|\operatorname{Rm}(h(t))| \leq \Lambda \rho^{-2}$ throughout $N \times\left[0, \beta \rho^2\right]$.

	Up to the rescaling, we can assume $T=1$ without loss of generality. Denote $10A=\delta^{-1}$ and
	take a constant $Q\ge \Lambda
	(\alpha+\beta)$.
	Choose $\eta \delta^{-1} > \rho_0>0$ sufficiently small so that  $\left|\operatorname{Rm}\left(g_0\right)\right| \leq \rho_0^{-2}$ on $ B_{g_0}(x_0, 10A)$. Applied with $N=B_{g_0}(x_0, 10A)$, we can find a complete smooth solution $h_1(t)$ to the Ricci flow on  $B_{g_0}(x_0, 10A)\times\left[0, \beta \rho_0^2\right]$ with $$|\operatorname{Rm}(h_1(t))| \leq \Lambda \rho_0^{-2} \text{\quad on\quad} B_{g_0}(x_0, 10A)\times\left[0, \beta \rho_0^2\right] $$
	and
	$$h_1(\cdot,0)=g_0 \text{\quad on\quad}  B_{g_0}(x_0, 10A-\rho_0).$$
	Then we denote $g(t)=h_1(t)$ on $B_{g_0}(x_0, 10A-\rho_0)\times\left[0, \beta \rho_0^2\right]$.
	Because $Q \geq \Lambda \beta$, the curvature bound can be weakened to
	\begin{equation}\label{prior_estimate}
		|\operatorname{Rm}(h_1(t))| \leq Q t^{-1} \text{\quad on \quad} B_{g_0}(x_0, 10A)\times\left[0, \beta \rho^2_0\right].
	\end{equation}

	Then we rescale the Ricci flow $h_1(t)$ as  $\tilde{h}_1(t)=t_1^{-1}h_1(t_1t)$,  $t\in [0,1]$,  where $t_1=\beta \rho_0^2$. Now we consider the ball $ B_{g_0}(x_0, r_1)$ with $r_1=10(1-t_1^{\frac{1}{2}})A-\rho_0$.
	Applying Lemma \ref{key_lemma} to the complete Ricci flow $\tilde{h}_1(t)$  with $s_1=0$,
	$s_2=t$, $B=\frac{1}{4}$, $D=0$ and $\tau_1=1$, we get for any $x\in B_{g_0}(x_0, r_1)$ and $t\in [0,1]$
	\begin{align*}
		&\boldsymbol{\nu}\left(B_{\tilde{h}_1(t)}(x, 3A), \tilde{h}_1(t), 2-t\right)-\boldsymbol{\nu}\left(B_{\tilde{h}_1(0)}(x, 10A), \tilde{h}_1(0), 2\right)\\
		\geq&\boldsymbol{\nu}\left(B_{\tilde{h}_1(t)}(x,5A-2A\sqrt{t}), \tilde{h}_1(t), 2-t\right)-\boldsymbol{\nu}\left(B_{\tilde{h}_1(0)}(x, 10A), \tilde{h}_1(0), 2\right)\\
		\geq&-\left\{\frac{1}{5 A^2B^2}+e^{-1}\right\} \cdot\left\{e^{\frac{t }{10 A^2B^2}}-1\right\}\\
		\geq&-\left\{\frac{1}{5 A^2B^2}+e^{-1}\right\} \cdot\left\{e^{\frac{1 }{10 A^2B^2}}-1\right\}\\
		\geq& -A^2,
	\end{align*}
	when $A$ is large. Without loss of generality, we can assume $t_1<\frac{1}{2}$.
	Then we have for any $x\in B_{g_0}(x_0, r_1)$ and $t\in [0,1]$
	\begin{align*}
		&\boldsymbol{\nu}\left(B_{\tilde{h}_1(t)}(x, 3A), \tilde{h}_1(t), 2-t\right)\\
		\geq& \boldsymbol{\nu}\left(B_{\tilde{h}_1(0)}(x, 10A), \tilde{h}_1(0), 2\right)-A^2\\
		=& \boldsymbol{\nu}\left(B_{g_0}(x, 10t_1^{\frac{1}{2}}A), g_0, 2t_1\right)-A^2\\
		\ge& \boldsymbol{\nu}\left(B_{g_0}(x_0, 10A), g_0, 2t_1\right)-A^2\\
		\ge  & \boldsymbol{\nu}\left(B_{g_0}(x_0, 10A, g_0, 1\right)-A^2\\
		\ge  & -101A^{-2},
	\end{align*}
	where we use $h_1(\cdot,0)=g_0$ on $B_{g_0}(x_0, 10A-\rho_0)$ and  $B_{g_0}(x, 10t_1^{\frac{1}{2}}A) \subset B_{g_0}(x_0, 10A-\rho_0)$ in the above inequalities.
	
	Next we prove that there exists a positive constant $A_{\alpha}$ depending only on $\alpha$ and $n$ such that
	$$|\operatorname{Rm}_{\tilde{h}_1}(x,t)| \leq \frac{\alpha}{t}$$
	for any  $x\in B_{g_0}(x_0, r_1)$ and $t\in (0,1]$  if $A
	\ge A_{\alpha}$. Otherwise, there exist a sequence of Ricci flows $h^i_1(t)|_{t\in [0,1]} $
	such that $\boldsymbol{\nu}\left(B_{\tilde{h}^i_1(t)}(x_i, 3A_i), \tilde{h}^i_1(t), 2-t\right)\ge -101A_i^{-2}$ with $A_i\to \infty$. Moreover, up to rescaling, we may assume
	$|\operatorname{Rm}_{\tilde{h}^i_1}(x_i,1)|=\alpha_0$ for some $\alpha_0>0$. Since
	$$|\operatorname{Rm}_{\tilde{h}^i_1}| \leq \frac{Q}{t}$$
	on $B_{\tilde{h}^i_1(t)}(x_i, 3A_i)\times\left(0, 1\right]$ by (\ref{prior_estimate}) and the non-collapsing by Theorem 3.3 in \cite{w1},
	we have $\left(B_{\tilde{h}^i_1(t)}(x_i, 3A_i), \tilde{h}^i_1(t), x_i\right)$ subconverges to a complete Ricci flow $\left(M^{\infty}, \tilde{h}^{\infty}_1(t), x_{\infty}\right)$ in $C^{\infty}$ sense with $|\operatorname{Rm}_{\tilde{h}^{\infty}_1}(x_{\infty},1)|=\alpha_0$
	with $\boldsymbol{\nu}\left(M^{\infty}, \tilde{h}^{\infty}_1(t),2-t\right)\ge 0$. Then  $\left(M^{\infty}, \tilde{h}^{\infty}_1(t)\right)$ must be isometric to the Euclidean space by Proposition 3.2 in \cite{w1}, which is a contradiction.

	We now define the sequences of times $t_k$ and radii $r_k$ inductively as follows:
	
	(a) $t_0=0$, $t_1=\beta \rho_0^2$ and $t_{k+1}=\left(1+\beta \alpha^{-1}\right) t_k$ for $k \geq 1$;
	
	(b) $r_0=10A-\rho_0$, $r_1=10A-\rho_0-10t_1^{\frac{1}{2}}A$, and $r_{k}=10A-\rho_0-10A\sum\limits^{k}_{i=1}t_i^{\frac{1}{2}}-(\alpha^{-\frac{1}{2}}+2\gamma\alpha^{\frac{1}{2}})\sum\limits^{k-1}_{i=1}t_i^{\frac{1}{2}}$ for $k \geq 2$.
	
	Let $\mathcal{P}(k)$ be the following statement: there exist a complete smooth Ricci flow $h_k(t)$ on time interval $\left[t_{k-1}, t_k\right]$ with 	
	\begin{equation*}
		|\operatorname{Rm}(h_{k+1}(t))|  \leq \frac{Q}{t}
	\end{equation*}
and a Ricci flow $g(t)$ on time interval $\left[0, t_k\right]$
	with $$g(t_{k-1})=h_k(t_{k-1})\text{\quad on \quad}B_{g_0}\left(x_0,r_{k-1}-(\alpha^{-\frac{1}{2}}+\gamma\alpha^{\frac{1}{2}})t_{k-1}^{\frac{1}{2}}\right)$$
	and
	$$
	g(t)=h_k(t)\text{\quad on \quad} B_{g_0}\left(x_0,r_{k-1}-(\alpha^{-\frac{1}{2}}+\gamma\alpha^{\frac{1}{2}})t_{k-1}^{\frac{1}{2}}\right)\times \left[t_{k-1}, t_k\right].
	$$
	Moreover,  $g(t)$ is smooth on
	$B_{g_0}\left(p, r_k\right) \times\left[0, t_k\right]$ and  satisfies
	$$|\operatorname{Rm}(g(t))| \leq \frac{\alpha}{t}\text{\quad on \quad} B_{g_0}\left(p, r_k\right) \times\left[0, t_k\right] $$
	with $g(0)=g_0$ on $B_{g_0}\left(p, r_k\right)$.
	Noted that we have proved that $\mathcal{P}(1)$ is true. Our goal is to show that $\mathcal{P}(k)$ is true for all $k$ provided $r_k>0$.
	
	We now perform an inductive argument. Suppose $\mathcal{P}(k)$ is true, we have a smooth Ricci flow $ g(t)$ on $B_{g_0}\left(p, r_k\right) \times\left[0, t_k\right]$ with $|\operatorname{Rm}(g(t))| \leq\frac{\alpha}{t}$. Applying Theorem \ref{Ho} with
	$N=B_{g_0}\left(x_0, r_k\right)$ so that for $h=g\left(t_k\right)$, we have
	$$\sup\limits _{N}\left|\operatorname{Rm}\left(h\right)\right| \leq \rho^{-2},$$
	where $\rho=\sqrt{t_k \alpha^{-1}}$. Moreover, for any $x \in B_{g_0}\left(x_0, r_k-(\alpha^{-\frac{1}{2}}+\gamma \alpha^{\frac{1}{2}}) t_k^{\frac{1}{2}}\right)$, Lemma \ref{shrinking}  gives
	$$\quad B_{g\left(t_k\right)}(x, \rho)  \subset B_{g_0}\left(x, (\alpha^{-\frac{1}{2}}+\gamma \alpha^{\frac{1}{2}}) t_k^{\frac{1}{2}}\right) \Subset N.$$
	This shows that $B_{g_0}\left(x_0, r_k-(\alpha^{-\frac{1}{2}}+\gamma \alpha^{\frac{1}{2}}) t_k^{\frac{1}{2}}\right) \subset N_\rho=\{x\in N| B_{g\left(t_k\right)}(x, \rho)\subset N\}$. Hence, we can find a complete Ricci flow $h_{k+1}(t)$ on  $B_{g_0}\left(p, r_{k}\right) \times\left[t_k, t_k+\beta \rho^2\right]$ with
	\begin{equation}\label{prior_cur_est}
		|\operatorname{Rm}(h_{k+1}(t))| \leq \Lambda \rho^{-2}=\Lambda \alpha t_k^{-1} \leq Q t^{-1}
	\end{equation}
	since $\Lambda\left(\alpha+\beta\right) \leq Q$, and
	$$h_{k+1}(t_{k})=g(t_{k})\text{\quad on \quad} B_{g_0}\left(x_0,r_{k}-(\alpha^{-\frac{1}{2}}+\gamma\alpha^{\frac{1}{2}})t_{k}^{\frac{1}{2}}\right)$$ and
	$t_k+\beta \rho^2=t_k\left(1+\beta \alpha^{-1}\right)=t_{k+1}$.
	Then we denote
	\begin{equation}\label{def_g}
	g(t)=h_{k+1}(t)\text{\quad on \quad} B_{g_0}\left(x_0,r_{k}-(\alpha^{-\frac{1}{2}}+\gamma\alpha^{\frac{1}{2}})t_{k}^{\frac{1}{2}}\right)\times \left[t_{k}, t_{k+1}\right].
	\end{equation}
	
	For $x\in B_{g_0}\left(x_0, r_{k+1}\right)$, together with Lemma \ref{shrinking} give for $i<k+1$
	$$\quad B_{g\left(t_i\right)}(x, 10t_{k+1}^{\frac{1}{2}}A)  \subset B_{g_0}\left(x, 10t_{k+1}^{\frac{1}{2}}A+\gamma\alpha^{\frac{1}{2}} t_{i}^{\frac{1}{2}}\right) \subset B_{g_0}\left(x_0,r_{k+1}+ 10t_{k+1}^{\frac{1}{2}}A+\gamma\alpha^{\frac{1}{2}} t_{i}^{\frac{1}{2}}\right)\subset B_{g_0}\left(x_0, r_i-(\alpha^{-\frac{1}{2}}+\gamma\alpha^{\frac{1}{2}}) t_i^{\frac{1}{2}}\right),$$
	by the definition of $r_{k+1}$. Then we have
	\begin{equation}\label{equal}
		g(t_i)=h_{i+1}(t_i)=h_{i}(t_i)
	\end{equation}
	on   $B_{g\left(t_i\right)}(x, 10t_{k+1}^{\frac{1}{2}}A)$ for any $i<k+1$.
	
	We rescale $g(t)$ and $h_i(t)$ as $\tilde{g}(t)=t_{k+1}^{-1}g(t_{k+1}t)|_{t\in [0,1]}$, $\tilde{h}_{i}(t)=t_{k+1}^{-1}h_{i}(t_{k+1}t)|_{t\in [\tilde{t}_{i-1},\tilde{t}_i]}$ for any
	$i\le k+1$, where
	$\tilde{t}_i=t_{k+1}^{-1}t_i=(1+\beta\alpha^{-1})^{i-k-1}$ for $i\ge 1$ and $\tilde{t}_0=0$.
	Denote
	$R_0=10$, $R_1=10-2\tilde{t}_1^{\frac{1}{2}}$ and $R_i=10-2\tilde{t}_i^{\frac{1}{2}}-\frac{5}{M_{\alpha}}(\beta\alpha^{-1})^\frac{1}{5}\sum\limits^{i-1}_{j=0}\tilde{t}_j^{\frac{1}{5}}$ for $i\ge 2$, where $M_{\alpha}=\frac{(\beta\alpha^{-1})^\frac{1}{5}}{(1+\beta\alpha^{-1})^\frac{1}{5}-1}$.
	Applying Theorem \ref{key_lemma} to the complete Ricci flows $\tilde{h}_{i+1}(t)|_{t\in [\tilde{t}_{i},\tilde{t}_{i+1}]}$ with  $s_1=\tilde{t}_{i}$, $s_2=\tilde{t}_{i+1}$,  $B=\frac{1}{4M_{\alpha}}(\tilde{t}_{i+1}-\tilde{t}_{i})^{\frac{1}{5}}=\frac{1}{4M_{\alpha}}(\beta\alpha^{-1})^\frac{1}{5}\tilde{t}_{i}^{\frac{1}{5}}$, $\tau_1=1$, $D=\frac{5}{M_{\alpha}}(\beta\alpha^{-1})^\frac{1}{5}\sum\limits^{i-1}_{j=0}\tilde{t}_j^{\frac{1}{5}}$ when $i\ge 1$ and $D=0$ when $i=0$,
	we have for any $x\in B_{g_0}\left(x_0, r_{k+1}\right)$ and $0\le i\le k-1$
	\begin{equation}\label{key}
		\begin{aligned}
	&\boldsymbol{\nu}\left(B_{\tilde{g}(\tilde{t}_{i+1})}(x, R_{i+1} A ),\tilde{g}(\tilde{t}_{i+1}),2-\tilde{t}_{i+1}\right)-	\boldsymbol{\nu}\left(B_{\tilde{g}(\tilde{t}_{i})}(x, R_{i} A), \tilde{g}(\tilde{t}_{i}),2-\tilde{t}_{i}\right)\\
	=	&\boldsymbol{\nu}\left(B_{\tilde{h}_{i+1}(\tilde{t}_{i+1})}(x, R_{i+1} A ),\tilde{h}_{i+1}(\tilde{t}_{i+1}),2-\tilde{t}_{i+1}\right)-	\boldsymbol{\nu}\left(B_{\tilde{h}_{i+1}(\tilde{t}_{i})}(x, R_{i} A), \tilde{h}_{i+1}(\tilde{t}_{i}),2-\tilde{t}_{i}\right)\\
	\geq &-\left(\frac{16M^2_{\alpha}}{5A^2(\tilde{t}_{i+1}-\tilde{t}_{i})^{\frac{2}{5}}}+e^{-1}\right)\left(e^{\frac{8M^2_{\alpha}(\tilde{t}_{i+1}-\tilde{t}_{i})^{\frac{3}{5}}}{5A^2}}-1\right)
	\\	
	\geq &-\left(\frac{16M^2_{\alpha}}{5A^2(\tilde{t}_{i+1}-\tilde{t}_{i})^{\frac{2}{5}}}+e^{-1}\right)\frac{8eM^2_{\alpha}(\tilde{t}_{i+1}-\tilde{t}_{i})^{\frac{3}{5}}}{5A^2}
	\\	
	= &-\left(\frac{128eM^2_{\alpha}}{25A^2}+\frac{8}{5}(\tilde{t}_{i+1}-\tilde{t}_{i})^{\frac{2}{5}}\right)\frac{M^2_{\alpha}(\tilde{t}_{i+1}-\tilde{t}_{i})^{\frac{1}{5}}}{A^2}
	\\	
	\geq &-2M_{\alpha}^2(\tilde{t}_{i+1}-\tilde{t}_{i})^{\frac{1}{5}}A^{-2}=-2M_{\alpha}^2(\beta\alpha^{-1})^\frac{1}{5}\tilde{t}_{i}^{\frac{1}{5}}A^{-2},
		\end{aligned}	
	\end{equation}
	when $A\ge \frac{64eM^2_{\alpha}}{5}$, where we use (\ref{equal}) and $\tilde{t}_{i}\le 1$ for
	$i\le k+1$ in the above inequalities.
	Likewise, applying Theorem \ref{key_lemma} to the complete Ricci flows $\tilde{h}_{k+1}(t)|_{t\in [\tilde{t}_{k},\tilde{t}_{k+1}]}$ with  $s_1=\tilde{t}_{k}$, $s_2=t$,  $B=\frac{1}{4M_{\alpha}}(\tilde{t}_{k+1}-\tilde{t}_{k})^{\frac{1}{5}}=\frac{1}{4M_{\alpha}}(\beta\alpha^{-1})^\frac{1}{5}\tilde{t}_{k}^{\frac{1}{5}}$, $\tau_1=1$, $D=\frac{5}{M_{\alpha}}(\beta\alpha^{-1})^\frac{1}{5}\sum\limits^{k-1}_{j=0}\tilde{t}_j^{\frac{1}{5}}$, we have
	for $x\in B_{g_0}\left(x_0, r_{k+1}\right)$ and $t\in [\tilde{t}_k,\tilde{t}_{k+1}]$,
	\begin{align*}
		&\boldsymbol{\nu}\left(B_{\tilde{h}_{k+1}(t)}(x, R_{k+1} A ),\tilde{h}_{k+1}(t),2-t\right)-	\boldsymbol{\nu}\left(B_{\tilde{h}_{k+1}(\tilde{t}_{k})}(x, R_{k} A), \tilde{h}_{k+1}(\tilde{t}_{k}),2-\tilde{t}_{k}\right)\\
		\ge &\boldsymbol{\nu}\left(B_{\tilde{h}_{k+1}(t)}(x, (10-2t^{\frac{1}{2}}-\frac{5}{M_{\alpha}}(\beta\alpha^{-1})^\frac{1}{5}\sum\limits^{k}_{j=0}\tilde{t}_j^{\frac{1}{5}}) A ),\tilde{h}_{k+1}(t),2-t\right)-	\boldsymbol{\nu}\left(B_{\tilde{g}(\tilde{t}_{k})}(x, R_{k} A), \tilde{g}(\tilde{t}_{k}),2-\tilde{t}_{k}\right)\\
		\geq&-\left\{\frac{1}{5 A^2B^2}+e^{-1}\right\} \cdot\left\{e^{\frac{t-\tilde{t}_{k} }{10 A^2B^2}}-1\right\}\\
		\geq&-\left\{\frac{1}{5 A^2B^2}+e^{-1}\right\} \cdot\left\{e^{\frac{\tilde{t}_{k+1}-\tilde{t}_{k} }{10 A^2B^2}}-1\right\}\\	
		\geq &-2M_{\alpha}^2(\beta\alpha^{-1})^\frac{1}{5}\tilde{t}_{k}^{\frac{1}{5}}A^{-2},
	\end{align*}
where we use the same estimates as (\ref{key}) to get the last inequality.
	Notice that $R_{k+1}=10-2-\frac{5}{M_{\alpha}}(\beta\alpha^{-1})^\frac{1}{5}\sum\limits^{k}_{j=1}\tilde{t}_j^{\frac{1}{5}}\ge 3$ and we can assume $t_{k+1}<\frac{1}{2}$ without loss of generality.
	It follows that
	for any $x\in B_{g_0}\left(x_0, r_{k+1}\right)$ and $t\in [\tilde{t}_k,\tilde{t}_{k+1}]=[(1+\beta\alpha^{-1})^{-1},1]$, we have

	\begin{align}
		&\boldsymbol{\nu}\left(B_{\tilde{h}_{k+1}(t)}(x,3 A ),\tilde{h}_{k+1}(\tilde{t}),2-t\right)\nonumber\\
		\ge&\boldsymbol{\nu}\left(B_{\tilde{h}_{k+1}(t)}(x, R_{k+1} A ),\tilde{h}_{k+1}(\tilde{t}),2-t\right)\nonumber\\
		\ge& \boldsymbol{\nu}\left(B_{\tilde{g}(0)}(x, 10A), \tilde{g}(0), 2\right)-2M_{\alpha}^2(\beta\alpha^{-1})^\frac{1}{5}\sum\limits_{i=1}^{k}\tilde{t}_{i}^{\frac{1}{5}}A^{-2}\nonumber\\
		=& \boldsymbol{\nu}\left(B_{g_0}(x, 10t_{k+1}^{\frac{1}{2}}A), g_0, 2t_{k+1}\right)-2M_{\alpha}^2(\beta\alpha^{-1})^\frac{1}{5}\sum\limits_{i=1}^{k}\tilde{t}_{i}^{\frac{1}{5}}A^{-2}\nonumber\\
		\ge & \boldsymbol{\nu}\left(B_{g_0}(x_0, 10A), g_0, 1\right)-2M_{\alpha}^3A^{-2}\nonumber\\
		\ge& -(2M_{\alpha}^3+100)A^{-2}\label{mu_estimate},
	\end{align}
	where we use $(\beta\alpha^{-1})^\frac{1}{5}\sum\limits_{i=1}^{k}\tilde{t}_{i}^{\frac{1}{5}}<M_{\alpha}$ and $B_{g_0}(x, 10t_{k+1}^{\frac{1}{2}}A)\subset B_{g_0}(x_0, 10A) $ in the above inequalities. Combining with (\ref{prior_cur_est}) and (\ref{mu_estimate}), we can use the same contradiction arguments as $k=1$ to prove that there exists a positive constant $A_{\alpha}$ depending on $\alpha$ and $n$ such that $$|\operatorname{Rm}_{\tilde{h}_{k+1}}(x,t)| \leq \frac{\alpha}{t}$$
	for $x\in B_{g_0}\left(x_0, r_{k+1}\right)$ and $t\in [\tilde{t}_k,\tilde{t}_{k+1}]$  if $A
	\ge A_{\alpha}$. This shows $|\operatorname{Rm}(g(t))| \leq\frac{\alpha}{t}$ on $B_{g_0}\left(p, r_{k+1}\right) \times\left[t_k, t_{k+1}\right]$ by (\ref{def_g}).
	Hence $\mathcal{P}(k+1)$ is true provided that $r_{k+1}>0$.
	
	Since $\lim\limits_{j \rightarrow+\infty} r_j=-\infty$, for any $\eta\in (0,1)$, there is $k \in \mathbb{N}$ such that $r_k \geq 10(1-\eta)A$ and $r_{k+1}<10(1-\eta)A$. In particular, $\mathcal{P}(k)$ is true since $r_k>0$. We now estimate $t_k$:
	\begin{align*}
		10(1-\eta)A> &r_{k+1}=10A-10A\sum\limits^{k+1}_{i=1}t_i^{\frac{1}{2}}-2(\alpha^{-\frac{1}{2}}+2\gamma\alpha^{\frac{1}{2}})\sum\limits^{k}_{i=1}t_i^{\frac{1}{2}} -\rho_0\\
		&\ge 10A-12A\sum\limits^{k+1}_{i=1}t_i^{\frac{1}{2}} \\
		& \geq 10A-12A t_{k+1}^{\frac{1}{2}}\sum_{i=1}^{\infty}(1+\beta\alpha^{-1})^{-\frac{i}{2}} \\
		&=10A-  \frac{12At_{k+1}^{\frac{1}{2}}}{(1+\beta\alpha^{-1})^{\frac{1}{2}}-1},
	\end{align*}
	when $A> \alpha^{-\frac{1}{2}}+2\gamma\alpha^{\frac{1}{2}}$ and $\rho_0$ is sufficient small.
	This implies
	$$
	t_{k+1}>\frac{ 25\eta^2}{36((1+\beta\alpha^{-1})^{\frac{1}{2}}-1)^2}=: \epsilon(\alpha,n)^2\eta^2 .
	$$
	In other words,  for any $\eta\in (0,1)$ there exists a smooth Ricci flow solution $g(t)$ defined on $B_{g_0}(x_0, 10(1-\eta)A) \times[0, \epsilon(\alpha,n)^2\eta^2]$ so that $g(0)=g_0$ and $|\operatorname{Rm}(g(t))| \leq \frac{\alpha}{t}$ if $A\ge A_{\alpha}$. And
	(\ref{non_collapsing}) follows from the estimate (\ref{mu_estimate}) and Theorem 3.3 in \cite{w1}.
	This completes the proof.
	$\Box$

	\textbf{Proof of Theorem \ref{main_result2}.}
	Up to rescaling, we may assume $r_0=1$ without loss of generality.
	Now we let $T=\delta^2$. For any $\tilde{\Omega}\subset \mathbb{R}^n$, we have $\boldsymbol{\mu}\left(\tilde{\Omega}, g_E, \tau \right)\ge \boldsymbol{\mu}\left(\mathbb{R}^n, g_E, \tau\right)\ge 0$.
	By Theorem \ref{isoperimetric}, we get for any $t<T=\delta^2$
	\begin{align*}
		&\boldsymbol{\mu}\left(B_{g_0}(x_0,\delta^{-1}\sqrt{T} ),g_0,t\right)\\
		=&\boldsymbol{\mu}\left(B_{g_0}(x_0,1 ),g_0,t\right)\\
		\ge& n\log(1-\delta)-\delta^2\\
		\ge& -2n\delta-\delta^2,
	\end{align*}
	when $\delta<\frac{1}{2}$. It follows that $\boldsymbol{\nu}\left(B_{g_0}(x_0,\delta^{-1}\sqrt{T} ),g_0,t\right)\ge -2n\delta-\delta^2$.
	Then Theorem \ref{main_result2} follows by Theorem \ref{main_result} directly.
	
	$\Box$

	\section{The applications to the incomplete pseudolocality theorems}
	The proof of Theorem \ref{existence} relies on the following pseudolocality theorems for incomplete case.
	\begin{thm}\label{local}
		For each $\alpha>0$ and $n\ge 2$, there exist $\delta=\delta
		(\alpha,n)$ and $\epsilon(\alpha,n)$ with the following properties.
		Suppose $\left(M, g_0\right)$ is a smooth $n$-dimensional Riemannian manifold (not necessarily complete) such that $B_{g_0}\left(x_0, K\delta^{-1} \sqrt{T}\right)\Subset M$ for $K>1$ and $T>0$. Moreover,
		for any
		$x\in B_{g_0}\left(x_0, (K-1)\delta^{-1} \sqrt{T}\right)$
		we have
		\begin{equation}\label{local_mu}
			\nu\left(B_{g_0}\left(x,\delta^{-1} \sqrt{T}\right), g_0, T\right) \geq-\delta^2.
		\end{equation}
		Then for each $\eta \in(0, 1)$ there exists a smooth Ricci flow $g(t)$ on $B_{g_0}\left(x_0, (K-1) (1-\eta)\delta^{-1} \sqrt{T}\right)\times [0, (\epsilon\eta)^2T]$ with $g(0)=g_0$ satisfying
		$$|R m|(x, t) \leq \frac{\alpha}{t}$$
		and
		$$\inf _{\rho \in\left(0, \alpha^{-1} \sqrt{t}\right)} \frac{\operatorname{Vol}\left(B_{g(t)}(x, \rho)\right)}{\rho^{n}} \geq(1-\alpha) \omega_n$$
		for  $(x,t)\in B_{g_0}\left(x_0, (K-1) (1-\eta)\delta^{-1} \sqrt{T}\right)\times [0, (\epsilon\eta)^2T]$.
	\end{thm}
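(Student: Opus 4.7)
The proof will follow the strategy of Theorem~\ref{main_result} very closely, replacing the single-point $\boldsymbol{\nu}$-bound at $x_0$ by the per-point hypothesis~(\ref{local_mu}). After rescaling so that $T=1$ and setting $A=(10\delta)^{-1}$, the compact containment $B_{g_0}(x_0,K\cdot 10A)\Subset M$ ensures $g_0$ has bounded curvature on some neighborhood of this closed ball, so we can initiate the same Hochard--Shi restart scheme used in the proof of Theorem~\ref{main_result} on this outer ball. That is, at step $k$ we apply Theorem~\ref{Ho} to extend $g(t_{k-1})$ to a complete metric of bounded curvature on all of $M$, run Shi's Ricci flow for a short time $\beta\rho^2$ with $\rho=\sqrt{t_{k-1}/\alpha}$, and paste the resulting $h_k(t)$ into $g(t)$ on a slightly shrunken ball. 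Bookkeeping as in Theorem~\ref{main_result} produces times $t_k=(1+\beta\alpha^{-1})^{k-1}t_1$ and radii $r_k$ (now starting from $r_0\approx K\cdot 10A$), together with a smooth Ricci flow $g(t)$ on $B_{g_0}(x_0,r_k)\times[0,t_k]$ satisfying the \emph{a priori} bound $|\operatorname{Rm}(g(t))|\leq Q/t$.

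To upgrade this to $|\operatorname{Rm}(g(t))|\leq\alpha/t$ on the target region $B_{g_0}(x_0,(K-1)(1-\eta)\cdot 10A)$, the plan is to apply Lemma~\ref{key_lemma} iteratively, but now centered at an arbitrary point $x$ in the target region rather than at $x_0$. Running through the same telescoping chain that led to~(\ref{mu_estimate}), one obtains
\[
\boldsymbol{\nu}\left(B_{\tilde h_{k+1}(t)}(x,3A),\tilde h_{k+1}(t),2-t\right)\;\geq\;\boldsymbol{\nu}\left(B_{g_0}(x,10A),g_0,1\right)-C(\alpha,n)A^{-2},
\]
with $C(\alpha,n)=2M_\alpha^3$, using the domain monotonicity of $\boldsymbol{\nu}$ together with the infimum over $\tau$ in its definition. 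For each $x\in B_{g_0}(x_0,(K-1)(1-\eta)\cdot 10A)$ the ball $B_{g_0}(x,10A)$ is still compactly contained in $M$, and the local hypothesis~(\ref{local_mu}) gives $\boldsymbol{\nu}(B_{g_0}(x,10A),g_0,1)\geq-\delta^2=-10^{-2}A^{-2}$, hence the right-hand side is $\geq-\widetilde C(\alpha,n)A^{-2}$. The curvature improvement and the local non-collapsing conclusion then follow from a verbatim repetition of the blow-up/contradiction argument in Theorem~\ref{main_result}: if the bound failed along a sequence $A_i\to\infty$ at base points $x_i$, the rescaled Ricci flows would subconverge by Hamilton's compactness to a complete, nontrivial limit with $\boldsymbol{\nu}\geq 0$, contradicting the Euclidean rigidity of Proposition~3.2 of~\cite{w1}; the volume non-collapsing then comes from Theorem~3.3 of~\cite{w1}.

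The principal subtlety, which dictates the factor $K$ in the hypothesis, is that each ball $B_{g_0}(x,10A)$ used when applying Lemma~\ref{key_lemma} centered at $x$ must lie inside the region $B_{g_0}(x_0,r_k)$ on which the approximating complete Ricci flow $h_k(t)$ actually lives. This forces $x$ to be at $g_0$-distance at least $10A$ from $\partial B_{g_0}(x_0,K\cdot 10A)$ and also obliges us to absorb the cumulative shrinking of $r_k$ arising both from the terms $10A\sum t_i^{1/2}$ and from the Hamilton--Perelman distance distortion $(\alpha^{-1/2}+2\gamma\alpha^{1/2})\sum t_i^{1/2}$ supplied by Theorem~\ref{shrinking}. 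Choosing $\delta=\delta(\alpha,n)$ small (so that $A$ is large and the geometric series $\sum t_i^{1/2}$ is controlled in terms of $\eta$) should ensure the induction proceeds up to a time of order $(\epsilon\eta)^2$ while the flow still covers the entire target region $B_{g_0}(x_0,(K-1)(1-\eta)\cdot 10A)$. I expect this careful shrinking/bookkeeping, rather than any new analytic input, to be the main technical hurdle.
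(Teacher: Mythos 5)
Your proposal is correct and matches the paper's own proof, which simply reruns the Hochard--Shi inductive scheme from Theorem~\ref{main_result} with the radii $r_k$ re-initialized from $r_0 = 10(K-1)A$ (not $10KA$, since the $\boldsymbol{\nu}$-hypothesis at $x$ requires $x \in B_{g_0}(x_0, 10(K-1)A)$ so that $B_{g_0}(x,10A)$ is admissible), and replaces the domain-monotonicity step $\boldsymbol{\nu}(B_{g_0}(x,10t_{k+1}^{1/2}A),g_0,2t_{k+1}) \geq \boldsymbol{\nu}(B_{g_0}(x_0,10A),g_0,1)$ with the per-point bound $\boldsymbol{\nu}(B_{g_0}(x,10t_{k+1}^{1/2}A),g_0,2t_{k+1}) \geq \boldsymbol{\nu}(B_{g_0}(x,10A),g_0,1) \geq -\delta^2 = -10^{-2}A^{-2}$, exactly as you describe. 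The rest of the argument (telescoping of Lemma~\ref{key_lemma}, blow-up contradiction against Proposition~3.2 of~\cite{w1}, non-collapsing from Theorem~3.3 of~\cite{w1}, and the bookkeeping giving the final time scale) is identical.
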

	\begin{proof}
		We can assume $T=1$ without loss of generality. Denote $\delta^{-1}=10A$.
		We only need modify the definitions of the sequence $r_k$ in the proof of Theorem \ref{main_result} to the following: $r_0=10(K-1)A$, $r_1=10(K-1-t_1^{\frac{1}{2}})A$, and $r_{k}=10(K-1)A-10A\sum\limits^{k}_{i=1}t_i^{\frac{1}{2}}-(\alpha^{-\frac{1}{2}}+2\gamma\alpha^{\frac{1}{2}})\sum\limits^{k-1}_{i=1}t_i^{\frac{1}{2}}$ for $k \geq 2$. Also noted that if $x\in B_{g_0}(x,r_{k+1})\subset B_{g_0}(x,10(K-1)A)$ and $t_{k+1}<\frac{1}{2}$, we have $\boldsymbol{\nu}\left(B_{g_0}(x, 10t_{k+1}^{\frac{1}{2}}A), g_0, 2t_{k+1}\right)\ge \boldsymbol{\nu}\left(B_{g_0}(x, 10A), g_0, 1\right)\ge -100A^2$ by (\ref{local_mu}). Then  the estimates in (\ref{mu_estimate}) still go through in this case.
		Since the rest of proof is almost same as Theorem \ref{main_result},
		we leave the details to
		the readers.
	\end{proof}

	\begin{cor}\label{cor}
		For every $\alpha>0$, $n\ge 2$ and $r_0>0$,  there exist $\delta=\delta
		(\alpha,n)$ and $\epsilon(\alpha,n)$ with the following properties.
		Suppose $\left(M, g_0\right)$ is a smooth $n$-dimensional Riemannian manifold(not necessarily complete) such that $B_{g_0}\left(x_0, Kr_0\right)\Subset M$ for some $K>0$. Moreover,  for any $x\in B_{g_0}\left(x_0, (K-1)r_0\right)$
		we have
		$$R\geq-r_0^{-2}  \text{\quad on\quad}  B_{g_0}\left(x, r_0\right)$$
		and
		$$
		\left(\text { Area }_{g_0}(\partial \Omega)\right)^n \geq(1-\delta) c_n\left(\operatorname{Vol}_{g_0}(\Omega)\right)^{n-1}
		$$
		for any regular domain $\Omega \subset B_{g_0}\left(x, r_0\right)$.
		Then for each $\eta \in(0, 1)$ there exists a smooth Ricci flow $g(t)$ on $B_{g_0}\left(x_0,(K-1)(1-\eta) r_0\right) \times\left[0,\left(\epsilon \eta r_0\right)^2\right]$ with $g(0)=g_0$ satisfying
		$$
		|\mathrm{Rm}|(x,t) \leq \frac{\alpha}{t}+\frac{1}{\left(\epsilon \eta r_0\right)^2}
		$$
		and
		$$\inf _{\rho \in\left(0, \alpha^{-1} \sqrt{t}\right)} \frac{\operatorname{Vol}\left(B_{g(t)}(x, \rho)\right)}{\rho^{n}} \geq(1-\alpha) \omega_n$$
		for
		$(x,t)\in B_{g_0}\left(x_0,(K-1)(1-\eta)\right) \times\left(0,\left(\epsilon \eta r_0\right)^2\right]$.
	\end{cor}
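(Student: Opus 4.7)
The plan is to deduce Corollary \ref{cor} from Theorem \ref{local} by the same strategy used to deduce Theorem \ref{main_result2} from Theorem \ref{main_result}: the scalar curvature lower bound together with the almost Euclidean isoperimetric inequality is converted, via Theorem \ref{isoperimetric}, into a lower bound for the local $\boldsymbol{\nu}$-functional. The only wrinkle is that the isoperimetric hypothesis here is assumed ball-by-ball on each $B_{g_0}(x,r_0)$ as $x$ varies over $B_{g_0}(x_0,(K-1)r_0)$, rather than on a single large ball, so Theorem \ref{isoperimetric} must be applied separately at each center $x$.

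By rescaling I may assume $r_0=1$. Let $\delta_0=\delta_0(\alpha,n)$ and $\epsilon_0=\epsilon_0(\alpha,n)$ be the constants furnished by Theorem \ref{local}, and set $T=\delta_0^2/4$, so that $\delta_0^{-1}\sqrt{T}=\tfrac12$. The trick is to apply Theorem \ref{local} with its ``$K$'' replaced by $K'=2K-1$: then the input radius $(K'-1)\delta_0^{-1}\sqrt{T}=K-1$ and the output radius $(K'-1)(1-\eta)\delta_0^{-1}\sqrt{T}=(K-1)(1-\eta)$ are precisely those prescribed by Corollary \ref{cor}, while $K'\delta_0^{-1}\sqrt{T}=K-\tfrac12<K$ preserves the containment $B_{g_0}(x_0,K'\delta_0^{-1}\sqrt{T})\Subset M$. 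For any $x\in B_{g_0}(x_0,K-1)$ the hypotheses give $R\ge -1$ on $B_{g_0}(x,1)\supset B_{g_0}(x,\tfrac12)$ together with an isoperimetric ratio $\lambda\ge(1-\delta)^{1/n}$ on $B_{g_0}(x,\tfrac12)$, so Theorem \ref{isoperimetric} (with $\underline{\Lambda}=1$), combined with the standard fact $\boldsymbol{\mu}(\tilde\Omega,g_E,s)\ge \boldsymbol{\mu}(\mathbb{R}^n,g_E,s)\ge 0$ for any Euclidean ball, yields for every $\tau\in(0,T]$
\begin{equation*}
\boldsymbol{\mu}(B_{g_0}(x,\tfrac12),g_0,\tau)\ge n\log\lambda-\tau \ge \log(1-\delta)-\delta_0^2/4.
\end{equation*}
Choosing $\delta=\delta(\alpha,n)$ small enough that $\log(1-\delta)\ge -3\delta_0^2/4$ (e.g.\ $\delta\le 1-e^{-3\delta_0^2/4}$) and taking an infimum over $\tau\in(0,T]$ produces $\boldsymbol{\nu}(B_{g_0}(x,\delta_0^{-1}\sqrt{T}),g_0,T)\ge -\delta_0^2$, which is exactly the hypothesis required by Theorem \ref{local}.

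Theorem \ref{local} then furnishes a smooth Ricci flow on $B_{g_0}(x_0,(K-1)(1-\eta))\times[0,(\epsilon_0\eta)^2T]$ with $g(0)=g_0$ satisfying $|\mathrm{Rm}|(x,t)\le \alpha/t$, which is stronger than and hence implies the claimed bound $\alpha/t+1/(\epsilon\eta r_0)^2$, together with the volume non-collapsing estimate $\inf_{\rho}\operatorname{Vol}(B_{g(t)}(x,\rho))/\rho^n\ge(1-\alpha)\omega_n$. Unrescaling back to general $r_0$ and setting $\epsilon=\epsilon_0\delta_0/2$ completes the proof. The main obstacle is not conceptual but purely a matter of bookkeeping: I must tune $T$ and the auxiliary parameter $K'$ so that the radii coming out of Theorem \ref{local} line up exactly with those demanded by Corollary \ref{cor}, while simultaneously leaving enough slack in the constant $-\delta_0^2/4$ so that the $\log(1-\delta)$ term from Theorem \ref{isoperimetric} can be absorbed into the input bound $-\delta_0^2$.
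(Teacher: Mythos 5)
Your proposal is correct and takes essentially the same route as the paper, which simply notes that Corollary~\ref{cor} follows from Theorem~\ref{local} together with Theorem~\ref{isoperimetric} by the same conversion used in the proof of Theorem~\ref{main_result2}. Your version just fills in the bookkeeping (the choice $T=\delta_0^2/4$ and the auxiliary parameter $K'=2K-1$ to align the radii, and the adjustment of $\delta$ to absorb the $\log(1-\delta)$ term) that the paper leaves implicit.
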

	\begin{proof}
		Corollary \ref{cor} follows from Theorem \ref{local} and Theorem \ref{isoperimetric} just as the proof of Theorem \ref{main_result2}.
	\end{proof}

	Now we give the proof of Theorem \ref{existence}. Indeed, we prove
	a stronger version. 	And Theorem \ref{existence} is just a direct corollary of Theorem \ref{local_existence} and Theorem \ref{isoperimetric}.
	
	 \begin{thm}\label{local_existence}
		For each $\alpha>0$ and $n\ge 2$, there exist $\delta=\delta
		(\alpha,n)$ and $\epsilon(\alpha,n)$ with the following properties.
		Suppose $\left(M^n, g_0\right)$ is a smooth complete $n$-dimensional Riemannian manifold  such that
		\begin{equation}\label{local_mu}
			\nu\left(B_{g_0}\left(x,\delta^{-1} \sqrt{T}\right), g_0, T\right) \geq-\delta^2.
		\end{equation}
		for any
		$x\in M$
		and some $T>0$.
		Then for each $\eta \in(0, 1)$ there exists a smooth Ricci flow $g(t)$ on $M\times [0, (\epsilon\eta)^2T]$ with $g(0)=g_0$ satisfying
\begin{equation}\label{local_1}
|R m|(x, t) \leq \frac{\alpha}{t}
\end{equation}		
		and
\begin{equation}\label{local_2}
\inf _{\rho \in\left(0, \alpha^{-1} \sqrt{t}\right)} \frac{\operatorname{Vol}\left(B_{g(t)}(x, \rho)\right)}{\rho^{n}} \geq(1-\alpha) \omega_n
\end{equation}
	for  $(x,t)\in M\times [0, (\epsilon\eta)^2T]$.
	\end{thm}

\begin{rem}
	Wang \cite{w2}  proved under a stronger assumption that if $\left(M^n, g_0\right)$ is a smooth complete $n$-dimensional Riemannian manifold  such that	 $\min \left\{\nu(M, g, T), nT R c_{\text {min }}(x)\right\} \geq-\delta^2$ then there exists a smooth Ricci flow $g(t)$ on $M\times [0, T]$ with $g(0)=g_0$ satisfying (\ref{local_1}), (\ref{local_2}) and the following distortion estimates hold:
\begin{align*}
\begin{aligned}
& \left|\log \frac{d_{g(t)}(x, y)}{d_{g(0)}(x, y)}\right|<\psi\left\{1+\log _{+} \frac{\sqrt{t}}{d_{g(0)}(x, y)}\right\}, \quad \forall t \in(0, T), x, y \in M . \\
& \left|d_{g(0)}(x, y)-d_{g(t)}(x, y)\right|<\psi \sqrt{t}, \quad \forall t \in(0, T), x, y \in M, d_{g(0)}(x, y) \leq \sqrt{t} ; 
\end{aligned}
\end{align*}
see Corollary 5.5 in \cite{w2}.
\end{rem}

\begin{proof}
	Applying Theorem \ref{local} to $B_{g_0}(x_0,K_i\delta^{-1} \sqrt{T})$ for any $\eta>0$ and let $K_i\to \infty$, we get a sequence of Ricci flows $g_i(t)$ with $g_i(0)=g_0$ on $B_{g_0}(x_0,(K_i-1)(1-\eta)\delta^{-1} \sqrt{T})\times\left[0,(\epsilon\eta)^2T\right]$  satisfying
	$$
	|\operatorname{Rm}|(g_i(t)) \leq \frac{\alpha}{t}.
	$$
	Together with Shi's estimates \cite{Sh} and modified Shi's interior estimates \cite{LP}, $g_i$ subconverges to a
	smooth Ricci flow $g(t)$ on $M\times\left[0,(\epsilon\eta)^2T\right]$ with $g(0)=g_0$ satisfying
	$$
	|\operatorname{Rm}|(g(t)) \leq \frac{\alpha}{t},
	$$
	on $M\times\left[0,(\epsilon\eta)^2T\right]$.
	The completeness of $g(t)$ follows from Theorem
	\ref{shrinking}.
\end{proof}
	
	Finally, we give the proof of Theorem \ref{rigidity}. Indeed, we prove
	a stronger version which improves
	a result by Wang \cite{w2} with an extra condition  that $\left(M^n, g\right)$ has the bounded curvature; see Proposition 3.2 in \cite{w2}.
	And Theorem \ref{rigidity} is just a direct corollary of Theorem \ref{rigidity1} and Theorem \ref{isoperimetric}.
		\begin{thm}\label{rigidity1}
		Suppose $\left(M^n, g\right)$ is a smooth complete $n$-dimensional Riemannian manifold  such that
		$$\boldsymbol{v}(M, g, T) \geq 0$$
		for some $T>0$.
		Then $M$ is isometric to the Euclidean space.
	\end{thm}
\begin{proof}For any $\alpha>0$, applied with Theorem \ref{main_result} to $B_{g_0}\left(x,\delta_i^{-1} \sqrt{T}\right)$ with $\delta_i\to 0$ provides a sequence of Ricci flows $g_i(t)$ with $g_i(0)=g$ on
$B_{g_0}\left(x_0,  (1-\eta)\delta_i^{-1} \sqrt{T}\right)\times [0, (\epsilon\eta)^2T]$	for some $\eta>0$
	  satisfying
	$$
	|\operatorname{Rm}|(g_i(t)) \leq \frac{\alpha}{t}.
	$$
	Taking $i\to \infty$, together with Shi's estimates \cite{Sh} and modified Shi's interior estimates \cite{LP},  we get a complete smooth Ricci flow $g(t)$ on $ [0, (\epsilon\eta)^2T]$ and satisfying
	\begin{equation}\label{bbq}
	|\operatorname{Rm}|(g(t)) \leq \frac{\alpha}{t}.
	\end{equation}
	And we see from (\ref{mu_estimate}) that $\boldsymbol{\nu}(M,g(t),2-t)\ge 0$ for $t\le (\epsilon\eta)^2T$. And the curvature is bounded
	on $[t_0,(\epsilon\eta)^2T]$ for any $0<t_0<(\epsilon\eta)^2T$,
	then $(M,g(t))$ must be   isometric to the Euclidean space by Proposition 3.2 in \cite{w1} 	on $[t_0,(\epsilon\eta)^2T]$ for any $0<t_0<(\epsilon\eta)^2T$. It follows that  $(M,g(0))$ must be   isometric to the Euclidean space since $g(t)$ is smooth at $t=0$.
\end{proof}

\end{document}